\titleformat{\section}{\centering\sc}{\S\arabic{section}.}{.5em}{}[]
\titleformat{\subsection}{\sl\bfseries}{}{0em}{}[]
\renewcommand{\phi}{\varphi} 
\newcommand{\CC}{\mathcal{C}}
\newcommand{\f}{\mathbb{F}}
\newcommand{\Fq}{\mathbb{F}_q}
\newcommand{\Height}{\mathrm{H}}
\newcommand{\SL}{\mathrm{SL}}
\newcommand{\SU}{\mathrm{SU}}
\newcommand{\Sp}{\mathrm{Sp}}
\newcommand{\PSL}{\mathrm{PSL}}
\newcommand{\PSU}{\mathrm{PSU}}
\newcommand{\RC}{\operatorname{RC}}
 \newcommand{\widesim}{
  \mathrel{{\scalebox{1.5}[1]{$\sim$}}}
}
\newcommand{\reducesize}[2]{%
  \mathbin{% This will be a binary math symbol (in terms of spacing around it)
    \ooalign{% Overlay a number of symbols
      \raisebox% Adjust vertical positioning of <object>
       {.4ex}% Move it down relative to current font     
          {$#1\widesim$}% First symbol (\sim in correct math style)
      \cr % Move to next symbol
      \hidewidth% Move symbol to right (~\hfill)
      \raisebox% Adjust vertical positioning of <object>
        {-.6ex}% Move it down relative to current font
        {\scalebox% Change the "font size"
          {.75}% to 50% of current font size
          {$#1#2$}% <object> in current math style
        }% \raisebox
      \hidewidth% Move symbol to left (~\hfill)
    }% \ooalign
  }% \mathbin
}% \reducesize
\newcommand{\nreducesize}[2]{%
  \mathbin{% This will be a binary math symbol (in terms of spacing around it)
    \ooalign{% Overlay a number of symbols
      \raisebox% Adjust vertical positioning of <object>
       {.4ex}% Move it down relative to current font     
          {$#1\not\widesim$}% First symbol (\sim in correct math style)
      \cr % Move to next symbol
      \hidewidth% Move symbol to right (~\hfill)
      \raisebox% Adjust vertical positioning of <object>
        {-.6ex}% Move it down relative to current font
        {\scalebox% Change the "font size"
          {.75}% to 50% of current font size
          {$#1#2$}% <object> in current math style
        }% \raisebox
      \hidewidth% Move symbol to left (~\hfill)
    }% \ooalign
  }% \mathbin
}% \reducesize
\newcommand{\stb}[1]{\mathpalette\reducesize{#1}}
\newcommand{\nstb}[1]{\mathpalette\nreducesize{#1}}
\newenvironment{verticallycentered}
               {\topskip0pt\vspace*{\fill}}
               {\vspace*{\fill}\clearpage}%
\tikzstyle{sommet}=[circle,draw, scale=0.5]
\tikzstyle{infosommet}=[scale=0.75]
\tikzstyle{infoarete}=[midway, above, scale=0.85]
\newtheoremstyle{pedro}{}{}{\itshape}{}{\bfseries}{.}{ }{\thmname{#1}\thmnumber{ #2}\thmnote{ (#3)}}
\newtheoremstyle{pedrodef}{}{}{}{}{\bfseries}{.}{ }{\thmname{#1}\thmnumber{ #2}\thmnote{ (#3)}}
\newtheoremstyle{pedroimage}{}{}{\itshape\footnotesize}{}{\itshape\footnotesize}{.}{ }{\thmname{#1}\thmnumber{ #2}\thmnote{ (#3)}}
\theoremstyle{pedroimage}
\theoremstyle{pedro}
\newtheorem{lem}{Lemma}[section]
\newtheorem{thm}[lem]{Theorem}
\newtheorem*{thm2}{Theorem}
\newtheorem{prop}[lem]{Proposition}
\theoremstyle{remark}
\theoremstyle{pedrodef}
\newtheorem{ex}[lem]{Example}
\title{The binary actions of simple groups with a single conjugacy class of involutions}
\author{Nick Gill \& Pierre Guillot }
\date{}
\numberwithin{equation}{section}
\begin{document}

\maketitle

\begin{abstract} 

We continue our investigation of binary actions of simple groups. In this paper, we demonstrate a connection between the graph $\Gamma(\mathcal{C})$ based on the conjugacy class $\CC$ of the group $G$, which was introduced in our previous work, and the notion of a \emph{strongly embedded subgroup} of $G$. We exploit this connection to prove a result concerning the binary actions of finite simple groups that contain a single conjugacy class of involutions.
\end{abstract}

%\tableofcontents

\section{Introduction}

The {\em relational complexity} of a permutation group is, in principle at least, a formidable tool for the organization and classification of these objects. The simplest permutation groups, by this measure, are the {\em binary} ones, which have received intense scrutiny in recent times \cite{cherlin2, wiscons, gls_binary}. 

In \cite{npg_ppg},  we have started the study of the binary actions of simple groups. We refer to {\em loc.\ cit}.\ for more motivation and background. We have introduced the graph $\Gamma(\CC)$, where $\CC$ is a conjugacy class of the group $G$: its vertices are elements of $\mathcal{C}$, and two vertices $g,h\in \mathcal{C}$ are connected in $\Gamma(\mathcal{C})$ if $g$ and $h$ commute and either $gh^{-1}$ or $hg^{-1}$ is in $\mathcal{C}$. We have shown how the study of the connected components of $\Gamma(\CC)$ sheds considerable light on the possible binary actions for $G$. 
 
Our main result in the present paper concerns a simple group $G$ with a single conjugacy class $\CC$ whose elements are involutions. In this setting, there is a connection, thanks to Aschbacher \cite{asch2} between the connectivity properties of $\Gamma(\CC)$ and the presence in $G$ of a \emph{strongly embedded subgroup} (i.e. a proper subgroup $H$, of even order, such that $|H\cap H^g|$ is odd for all $g\in G\setminus H$).

We exploit the fact that the simple groups that contain a strongly embedded subgroup are all known, to prove the following. 

\begin{thm}\label{t: se_strong}
Suppose that $G$ is a simple group that contains a single conjugacy class of involutions, let $H$ be a proper subgroup of $G$ of even order and let $\Omega$ be the set of right cosets of $H$ in $G$. Then the action of $G$ on $\Omega$ is binary if and only if we are in one of the following situations:
\begin{enumerate}
% \item $H$ is of odd order;
      \item $G=\SL_2(2^a)$ with $a\geq 2$ and $H$ is a Sylow $2$-subgroup of $G$;
      \item $G={^2\!B_2}(2^{2a+1})$ with $a\geq 1$ and $H$ is the centre of a Sylow $2$-subgroup of $G$;
      \item $G=\PSU_3(2^a)$ with $a\geq 2$ and $H$ is the centre of a Sylow $2$-subgroup of $G$.
\end{enumerate}
\end{thm}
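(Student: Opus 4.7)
The plan is to combine three ingredients: the analysis of the graph $\Gamma(\CC)$ from \cite{npg_ppg}, Aschbacher's theorem \cite{asch2} relating connectivity of $\Gamma(\CC)$ to the existence of a strongly embedded subgroup, and the Bender--Suzuki classification, which identifies the finite simple groups with a strongly embedded subgroup as precisely the three families $\PSL_2(2^a)$ ($=\SL_2(2^a)$ here), ${}^2B_2(2^{2a+1})$ and $\PSU_3(2^a)$ appearing in the conclusion. The overall strategy splits naturally into three steps: show that binarity of the coset action on $\Omega$ forces $G$ to contain a strongly embedded subgroup; for each of the three resulting groups, determine which even-order subgroups $H$ can give rise to a binary action; and verify binarity directly for the pairs $(G,H)$ listed.

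For the \emph{necessity} direction, assume that the action of $G$ on $\Omega=H\backslash G$ is binary. Since $|H|$ is even and $\CC$ is the unique class of involutions, the stabilizer of a point meets $\CC$. The machinery of \cite{npg_ppg} should translate binarity into a disconnection statement for $\Gamma(\CC)$, or more likely for a natural subgraph defined by the point stabilizer; Aschbacher then converts this into the existence of a strongly embedded subgroup of $G$, and Bender--Suzuki pins $G$ down to one of the three families. In each of these the subgroup structure is classical, and the candidates for $H$ of even order form a short list which I would prune by using the results of \cite{npg_ppg} to translate binarity into explicit structural constraints on $H$; for each rejected $H$ one would exhibit a pair of $4$-tuples in $\Omega$ that are $3$-equivalent but not $G$-conjugate, thereby contradicting binarity.

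For the \emph{sufficiency} direction, each of the three listed pairs $(G,H)$ is a classical, very explicit coset action; in each case $G$ acts on $\Omega$ as a sharply-transitive extension intimately related to the rank-one BN-pair structure of the underlying Zassenhaus group. I would verify binarity directly, using sharp transitivity to reduce the problem to a fixed-point analysis inside $H$ which is then routine. The main obstacle, I expect, is the first half of the necessity direction: extracting a strongly embedded subgroup from the bare assumption of binarity, \emph{without} assuming in advance that $H$ is itself strongly embedded. Establishing this translation cleanly, via a careful reading of the $\Gamma(\CC)$-machinery and Aschbacher's theorem, is what makes the whole strategy work, and is where the bulk of the technical effort in the paper should sit.
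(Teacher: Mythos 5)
Your high-level skeleton does match the paper's: binarity plus an even-order stabilizer forces an involution $g\in H\cap\CC$, the machinery of \cite{npg_ppg} (the component group of $g$ in $\Gamma(\CC)$ must lie in $H$) shows $\Gamma(\CC)$ is disconnected since $H<G$ and $G$ is simple, Aschbacher then produces a strongly embedded subgroup, and Bender--Suzuki (with Glauberman's $Z^*$-theorem) pins $G$ to the three families. The paper packages exactly this as Proposition~\ref{p: se_weak}, and it also extracts one more crucial fact you omit: since $Z(P)$ is elementary abelian, its involutions are pairwise adjacent in $\Gamma(\CC)$, so $H$ must \emph{contain the centre of a Sylow $2$-subgroup}. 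This anchor is what makes the rest of the proof possible.

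The genuine gap is that you dispose of the heart of the proof in one sentence (``the candidates for $H$ form a short list which I would prune''), and you explicitly locate the bulk of the difficulty in the wrong place. The reduction to the three families is essentially immediate from cited results; what occupies most of the paper is precisely the pruning, and it is not a finite check: for each family one must rule out \emph{every} $H$ strictly between $Z(P)$ and $G$. For $\SL_2(q)$ and ${^2\!B_2}(q)$ this needs a Frobenius-complement argument (Lemmas~\ref{l: frobenius} and \ref{l: point stabilizer}) to force $H=O_2(H)$, and then, for ${^2\!B_2}(q)$, an explicit $4\times 4$ matrix computation producing distinct conjugates with $H_1\cap H_2\cdot H_3\neq\{1\}$. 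For $\PSU_3(q)$ one needs a two-case analysis: if $H$ contains two conjugates of $Z(P)$ it contains a copy of $\SL_2(q)$, and one invokes Lemma~\ref{l: beautiful} to manufacture an alternating section $A_{q-1}$, contradicting \cite[Proposition~5.3.7]{kl}; if $H$ lies in the Borel subgroup, one needs the component-group machinery again inside $\overline{B}=\overline{P}\rtimes\overline{T}$ (including a dedicated lemma that the Paley-type graph on cubes in $\f_{q^2}^\times$ is connected), a new ``pseudo-Frobenius'' lemma (Lemma~\ref{lem-pseudo-frobenius}) to constrain the torus part, and finally explicit non-$3$-related pairs of triples of isotropic vectors. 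None of these tools is suggested by your proposal, so the pruning step as written would simply stall. Separately, your sufficiency sketch is off: these coset actions are not sharply transitive extensions; the paper's verification rests on $Z(P)$ being a TI-subgroup, which bounds the height by $2$ and reduces binarity (via Lemma~\ref{lem-RC-height-2}) to checking $H_1\cdot H_2\cap H_3=\{1\}$ for distinct conjugates, done by direct matrix computations (a trace argument for $\SL_2$, reduction to $\SL_2$ for $\PSU_3$, and an explicit product for the Suzuki groups).
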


% Theorem \ref{thm-binary-actions-An} has left out the group $A_5 \cong \PSL_2(4) \cong \PSL_2(5)$.

For instance, for any prime power $q$, the group $\PSL_2(q)$ has a single conjugacy class of involutions, and Theorem~\ref{t: se_strong} thus provides us with a lot of information about this group and its possible binary actions. To give an example of a complete classification for a given group, the second part of the paper classifies all transitive, binary actions of $\PSL_2(q)$, with $q$ arbitrary and no restriction on the stabilizers. This also completes the picture for alternating groups $A_n$ which were considered in \cite{npg_ppg} for $n \ge 6$, since $A_5 \cong \PSL_2(4) \cong \PSL_2(5)$.

\begin{thm}\label{t: psl2 binary transitive}
Let $G=\PSL_2(q)$ act on $\Omega$, the set of right cosets of a subgroup $H<G$ and suppose $q\neq 5$. The action is binary if and only if one of the following occurs:
\begin{enumerate}
      \item $H=\{1\}$ and the action is regular;
      \item $q=2^a$ for some integer $a$ and $H$ is a Sylow $2$-subgroup of $G$;
      \item $q=2^a$ for some odd integer $a$ and $|H|=3$.
\end{enumerate}
\end{thm}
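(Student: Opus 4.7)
The approach splits into two cases depending on the parity of $|H|$, and the arguments needed in the two cases are quite different.

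When $|H|$ is even, I would invoke Theorem~\ref{t: se_strong} directly. For $q\geq 4$ with $q\neq 5$, the group $G=\PSL_2(q)$ is simple and has a single conjugacy class of involutions in every characteristic. Comparing orders shows that $\PSL_2(q)$ is never isomorphic to any ${}^2\!B_2(2^{2a+1})$ or $\PSU_3(2^a)$, so only conclusion (1) of Theorem~\ref{t: se_strong} can hold; this recovers case (2) of the present theorem. The remaining small values $q\in\{2,3\}$, where $G$ is not simple, are disposed of by direct inspection of $S_3$ and $A_4$.

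When $|H|$ is odd, the plan is to use Dickson's classification of subgroups of $\PSL_2(q)$. Such an $H$ is either cyclic --- sitting inside a split torus (of order dividing $(q-1)/\gcd(2,q-1)$) or a non-split torus (of order dividing $(q+1)/\gcd(2,q-1)$) --- or, when $q=p^f$ with $p$ odd, an elementary abelian $p$-subgroup. Each family is treated in turn. In the non-binary cases, the idea is to produce a small \emph{beautiful subset} in the sense of Cherlin: a short tuple of cosets of $H$ no two of whose coordinates can be distinguished by the pairwise $G$-orbit data, yet which cannot be matched by any single element of $G$. Such a tuple is constructed using the action of $G$ on $\mathbb{P}^1(\mathbb{F}_q)$, the fixed-point pattern of $H$, and the normalizer $N_G(H)$.

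The one delicate point is the case $q=2^a$ with $a$ odd and $|H|=3$, where the action is binary. Here $H$ lies in a non-split torus, since $3\mid q+1$ precisely when $a$ is odd; the argument is a positive verification that every tuple of cosets is determined, up to $G$, by its pairwise data. This must be contrasted with $q=2^a$, $a$ even, $|H|=3$, where $3\mid q-1$, the subgroup $H$ lies in a split torus, and the action fails to be binary --- so the distinction is genuine and will have to be explained by the different structure of $N_G(H)$ and of $H$-orbits on cosets in the two cases.

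The main obstacle is the odd-order case. Constructing the beautiful subsets for every family of odd-order subgroups requires a separate argument tailored to the subgroup type, and the positive verification for the exceptional case $|H|=3$ with $a$ odd needs a careful direct check. Once these are in place, the even-order case follows almost immediately from Theorem~\ref{t: se_strong}.
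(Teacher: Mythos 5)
Your treatment of the even-order case matches the paper exactly: for $q\geq 4$ the group $\PSL_2(q)$ is simple with a single class of involutions, so Theorem~\ref{t: se_strong} applies, and only its first case is compatible with $G\cong \PSL_2(q)$, yielding case (2) of the present theorem. One warning, though: your claim that $q\in\{2,3\}$ is ``disposed of by direct inspection'' fails at $q=3$. The action of $A_4$ on the three cosets of $H=V_4$ induces a regular action of a cyclic group of order $3$, and regular actions are binary; so the statement, read literally, is false there. Like the paper, you must read the theorem with the standing hypothesis that $G$ is simple, i.e.\ $q\geq 4$.

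The odd-order case is where all the content lies, and there your proposal has genuine gaps. First, your Dickson-based case division is incomplete: for $p$ odd, an odd-order subgroup of $\PSL_2(q)$ need not be cyclic or elementary abelian; it can be a Frobenius group $U_0\rtimes T_0$ with $U_0$ a non-trivial $p$-group and $T_0$ cyclic of odd order dividing $(q-1)/2$ --- for example the Borel subgroup of order $21$ in $\PSL_2(7)$. The paper needs a separate argument for exactly this family (Lemma~\ref{l: whatevs}, via Lemmas~\ref{l: frobenius} and~\ref{l: point stabilizer}). Second, for the families you do list you give no argument, only the intention of producing witnesses, and you yourself flag that both the constructions and the verification of the exceptional case remain to be done; these are precisely the hard steps. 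The paper's solutions are quite specific and have no obvious substitute: for cyclic $H$ of order dividing $q\pm 1$, the TI-property converts binariness into the non-existence of distinct conjugates $H_1,H_2,H_3$ with $H_1\cap H_2\cdot H_3\neq\{1\}$ (Lemma~\ref{lem-RC-height-2}), which via $\CC\subseteq\CC^2$ (Proposition~\ref{p: psl2 conj}) reduces to solving explicit quadratics over $\f_q$; for $|H|=3$ and $q=2^a$ the relevant equation is $\lambda^2+\lambda+1=0$, soluble iff $a$ is even --- this is exactly the source of your ``delicate'' exceptional case --- while for odd $q$ a Chevalley--Warning argument produces solutions. For $H$ a $p$-group the paper does not construct tuples directly at all, but instead uses the component-group theorem (Theorem~\ref{coro-connected-comp}) together with a connectivity analysis of the graph on nonzero squares (Lemmas~\ref{l: consecutive squares} and~\ref{l: p elements}) to force $H$ to be a full Sylow $p$-subgroup, which is then eliminated by an explicit matrix identity (Lemma~\ref{l: psl2 sylow}). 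Finally, a terminological slip: what you describe is a pair of $2$-related tuples in different $G$-orbits, i.e.\ a direct witness to non-binariness, not a ``beautiful subset'', which in this literature means a subset on which the setwise stabilizer induces a $2$-transitive non-symmetric group. As it stands, your text is a correct outline of the even case plus an incomplete plan for the odd case, not a proof.
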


It is also possible, and much easier, to do the same for the Suzuki groups. At the end of the paper, we prove:

\begin{thm}
      Let $G={^2\!B_2}(2^{2a+1})$, with $a\geq 1$, act on $\Omega$, the set of right cosets of a subgroup $H<G$. The action is binary if and only if one of the following occurs:
      \begin{enumerate}
            \item $H=\{1\}$ and the action is regular;
            \item $H$ is the centre of a Sylow $2$-subgroup of $G$.
      \end{enumerate}
      \end{thm}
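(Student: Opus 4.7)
The plan for the ``if'' direction is immediate: the regular action is always binary, and the action on the cosets of $Z(F)$ (where $F$ is a Sylow $2$-subgroup) is covered by case (2) of Theorem~\ref{t: se_strong}, which is applicable since $G = {}^2\!B_2(q)$ has a single conjugacy class of involutions.

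For the ``only if'' direction, I would split into two cases according to the parity of $|H|$. If $|H|$ is even then Theorem~\ref{t: se_strong} applies, and among its three cases only case (2) involves a Suzuki group, forcing $H = Z(F)$. The substantive work is therefore the case where $|H|$ is odd, in which I must show $H = \{1\}$.

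Assume $|H|$ is odd and non-trivial. The first step is to invoke Suzuki's description of the subgroup lattice of ${}^2\!B_2(q)$, from which every odd-order subgroup is cyclic and contained in one of the three maximal tori $T_0$, $T_+$, $T_-$ of orders $q-1$, $q+\sqrt{2q}+1$, $q-\sqrt{2q}+1$. Letting $T$ denote the torus containing $H$, the cyclicity of $T$ makes $H$ characteristic in $T$, so $N_G(T) \subseteq N_G(H)$; and in each of the three cases, $N_G(T)$ contains an involution $t$ inverting $T$ (for $T_0$ this is the non-trivial element of $N_G(T_0)/T_0 \cong C_2$, and for $T_\pm$ the unique involution of $N_G(T_\pm)/T_\pm \cong C_4$). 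Thus I have an involution $t \in N_G(H)$ that inverts every element of $H$.

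The remaining step is to exhibit a configuration of cosets showing that the action on $G/H$ is not binary. My plan is to use $t$ together with a second involution $s \neq t$ commuting with $t$; such an $s$ exists because $C_G(t)$ is the Sylow $2$-subgroup $F$ containing $t$, and $Z(F)$ is elementary abelian of order $q = 2^{2a+1} \geq 8$. Denoting $V = \langle s, t \rangle$, I would compare two $4$-tuples of cosets: one built from the $V$-orbit of $H$, and one obtained from the first by translating a single coordinate by a non-trivial element $h \in H$. The $2$-subtuple data agree across the two $4$-tuples by virtue of the commutation relations inside $V$ and the inversion action of $t$ on $H$, but no single element of $G$ can realize the transformation between the two full tuples: such an element would have to both centralize $h$ (forcing it to lie inside $T$, since maximal tori in ${}^2\!B_2(q)$ are self-centralizing) and simultaneously carry $V$ to a prescribed conjugate, which is incompatible because $V \cap T = \{1\}$.

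The main obstacle is precisely this last step: the explicit construction of the witness and the verification of the required $H$-double-coset equalities for each pair of entries, together with the ruling out of a simultaneous conjugator. The calculation depends on the fusion of Sylow $2$-elements in $N_G(T)$, the commutation structure of $Z(F)$, and the self-centralizing, pairwise trivially-intersecting nature of the three maximal tori of ${}^2\!B_2(q)$.
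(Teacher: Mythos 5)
Your reduction is sound up to the point where the real work begins, and it matches the paper's own framing: the ``if'' direction and the even-order case follow from Theorem~\ref{t: se_strong}, and for $|H|$ odd and non-trivial you correctly record that $H$ is cyclic, is a TI-subgroup lying in a maximal torus $T$, and is inverted by an involution $t\in N_G(T)=N_G(H)$. The gap is in your witness construction, which cannot even produce tuples that are $2$-related. Since you choose $s$ to commute with $t$, and $t$ normalizes $H$, you get
\[
H^{st}=H^{ts}=(H^t)^s=H^s,
\]
so the cosets $Hs$ and $Hst$ have the \emph{same} stabilizer $H^s=H^{st}$. Consequently every element of $G$ fixing $Hs$ also fixes $Hst$, and conversely; translating exactly one of these two coordinates by $h$ therefore yields two $4$-tuples that already fail to be $2$-related on the pair of coordinates occupied by $Hs$ and $Hst$. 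Translating the first or second coordinate instead does nothing at all, because $Hh=H$ and $Hth=H(tht^{-1})t=Ht$. So no choice of translated coordinate gives a non-binarity witness; moreover your final step (that a full conjugator ``would have to centralize $h$'') is unjustified. The flaw is structural: making $s$ commute with $t$ is precisely what collapses the stabilizers.

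What fills this hole in the paper is a counting argument for which you have no substitute. Since the orbits of $N_G(H)$ on $H\smallsetminus\{1\}$ have size at most $4$, the generator $h$ is conjugate to at most $4$ of its powers; on the other hand, the class-multiplication formula
\[
|\{(x,y)\in\CC\times\CC \mid xy=h\}| \;=\; \frac{|\CC|^2}{|G|}\sum_{\chi\in\mathrm{Irr}(G)}\frac{\chi(h)^2\overline{\chi(h)}}{\chi(1)},
\]
evaluated using Suzuki's character table, shows that the number of such pairs is strictly greater than $4$. Hence some pair has $x,y\notin H$ (note that $x\in H$ forces $y=x^{-1}h\in H$), and then $H_1=H$, $H_2=\langle x\rangle$, $H_3=\langle y\rangle$ are distinct conjugates of $H$ with $h\in H_1\cap H_2\cdot H_3$; Lemma~\ref{lem-RC-height-2} (the TI-subgroup criterion, which also shows $3$-tuples suffice, so your $4$-tuples are an unnecessary detour) then proves the action is not binary. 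Without this counting step, or a genuine replacement for it, your treatment of the odd-order case does not go through.
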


Thus we continue to see, after our initial observations with the alternating groups in \cite{npg_ppg}, that binary actions of simple groups appear to be rare. A third paper, with M. Liebeck, will fully describe the graph $\Gamma(\CC)$ when $\CC$ is a conjugacy class of involutions in a simple group of Lie type of characteristic $2$. The answer is that these graphs are ``often'' connected, again preventing the existence of binary actions in many cases, and highlighting the exceptions.
\section{Background on relational complexity}\label{s: background}

In this section we gather some basic definitions and results which will be needed in the sequel. See \cite{npg_ppg} for proofs and a more leisurely exposition.

\subsection{Complexity, height \& some basic criteria}

All groups mentioned in this paper are finite, and all group actions are on finite sets. We consider a group $G$ acting on a finite set $\Omega$ (on the right) and we begin by defining the {\em relational complexity}, $\RC(G, \Omega)$, of the action, which is an integer greater than $1$. 

% There are at least two equivalent definitions of $\RC(G,\Omega)$. The first definition involves {\em homogeneous relational structures} on $\Omega$, and while it is useful in particular for motivation, we shall work exclusively with a different definition in terms of the action of $G$ on tuples. More information on the first definition, as well as a wealth of results on relational complexity, can be found in \cite{gls_binary}.  

Let $I, J \in \Omega^n$ be $n$-tuples of elements of $\Omega$, for some $n \ge 1$, written $I=(I_1, \ldots, I_n)$ and $J= (J_1, \ldots, J_n)$. For $r \le n$, we say that $I$ and $J$ are {\em $r$-related}, and we write $I \stb{r} J$, when for each choice of indices $1 \le k_1 < k_2 < \cdots < k_r \le n$, there exists $g \in G$ such that $I_{k_i}^g = J_{k_i}$ for all $i$. 

Now the {\em relational complexity} of the action of $G$ on $\Omega$, written $\RC(G, \Omega)$, is the smallest integer $k \ge 2$ such that whenever $n \ge k$ and $I, J \in \Omega^n$ are $k$-related, then $I$ and $J$ are $n$-related (or in other words there exists $g \in G$ with $I^g = J$). One can show that such an integer always exists and indeed, it is always less than $|\Omega|$ (or it is $2$ if $|\Omega|=1$ or $2$), see \cite{gls_binary}.  

When $\RC(G, \Omega)=2$, we say that the action is {\em binary}.

% We are about to state a basic criterion for binariness that will be of constant use. Before we do this however, it is best to establish the next lemma, revealing the symmetry of a certain situation.

% \begin{lem} \label{lem-symmetry-h1h2h3}
% Let $G$ be a group. For $i=1, 2, 3$, let $H_i$ be a subgroup of $G$, and let $h_i \in H_i$. Assume that $h_1 h_2 h_3 = 1$. Then the following conditions are equivalent: \begin{enumerate}
% \item there exist $h_2' \in H_2 \cap H_1$ and $h'_3 \in H_3 \cap H_1$ such that $h_1 h_2' h_3' = 1$;
% \item there exist $h_1' \in H_1 \cap H_2$ and $h'_3 \in H_3 \cap H_2$ such that $h_1' h_2 h_3' = 1$;
% \item there exist $h_1' \in H_1 \cap H_3$ and $h_2' \in H_2 \cap H_3$ such that $h_1' h_2' h_3= 1$.
% \end{enumerate}
% \end{lem}

% When we have elements with $h_1 h_2 h_3=1$ as above, and when the equivalent conditions of the lemma fail to hold, we think of the triple $(h_1, h_2, h_3)$ as ``minimal'' or ``optimal'', in the sense that it cannot be ``improved'' to a triple with all three elements taken from the same subgroup. As we shall see presently, the presence of such an optimal triple, with the groups $H_i$ conjugates of a single subgroup $H$, is an obstruction to the binariness of the action of $G$ on $(G:H)$.

\begin{lem}[basic criteria]\cite[Lemma~2.4]{npg_ppg}\label{lem-criterion-2-not-implies-3}
    Let~$G$ act on~$\Omega $. The following conditions are equivalent:

    \begin{enumerate}
    \item There exist $I,J \in \Omega^3$ such that $I \stb{2} J$ but $I \nstb{3}J$.
    \item There are points~$\alpha_i \in \Omega$ with stabilizers~$H_i$, and elements~$h_i \in H_i$, for~$i=1,2,3$, satisfying :
  
    \begin{enumerate}
    \item $h_1 h_2 h_3 = 1$,
    \item there do NOT exist~$h_2' \in H_2 \cap H_1$, $h_3' \in H_3 \cap H_1$ with~$h_1 h_2' h_3' = 1$.
    \end{enumerate}
  
%(See also Lemma \ref{lem-symmetry-h1h2h3} above.)

\item There are points~$\alpha_i \in \Omega$ with stabilizers~$H_i$, for~$i=1,2,3$, such that $H_1 \cap H_2 \cdot H_3$ is not included in $H_1 \cap (H_1 \cap H_2) \cdot H_3$.

    \end{enumerate}
    
    In particular, when these conditions hold, the action of $G$ on $\Omega$ is not binary.
  \end{lem}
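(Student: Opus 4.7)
The plan is to prove the chain of equivalences $(1) \Leftrightarrow (2) \Leftrightarrow (3)$ by direct translation. The key device is to use the freedom granted by $I \stb{2} J$ to normalise any pair of $3$-tuples so that they agree on the first two coordinates, which converts the relational conditions on $I, J \in \Omega^3$ into purely group-theoretic conditions on certain products in $G$.

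For $(1) \Rightarrow (2)$, I start with $I=(I_1,I_2,I_3)$ and $J=(J_1,J_2,J_3)$ witnessing $I \stb{2} J$ and $I \nstb{3} J$. The hypothesis $I \stb{2} J$ furnishes group elements $a_{12}, a_{13}, a_{23} \in G$, each matching the indicated pair of coordinates. Since the relations $\stb{r}$ are preserved under acting on the second tuple, I replace $J$ by $J \cdot a_{12}^{-1}$ and assume $J_1=I_1$ and $J_2=I_2$. Setting $\alpha_i := I_i$ and $H_i := \stab(\alpha_i)$, the elements $a'_{13} := a_{13}a_{12}^{-1} \in H_1$ and $a'_{23} := a_{23}a_{12}^{-1} \in H_2$ both send $\alpha_3$ to $J_3$, so $(a'_{23})^{-1} a'_{13} \in H_3$. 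I then define $h_1 := (a'_{13})^{-1} \in H_1$, $h_2 := a'_{23} \in H_2$, $h_3 := (a'_{23})^{-1} a'_{13} \in H_3$, which yields $h_1 h_2 h_3 = 1$, giving (2a). For (2b), observe that $I \stb{3} J$ is equivalent to the existence of $g \in H_1 \cap H_2$ with $g(\alpha_3) = J_3$; such a $g$ has the form $g = h_1^{-1} h_3'$ for some $h_3' \in H_3$, and imposing $g \in H_1$ forces $h_3' \in H_1 \cap H_3$ while $g \in H_2$ gives $h_2' := g \in H_1 \cap H_2$ satisfying $h_1 h_2' h_3' = 1$. So the failure $I \nstb{3} J$ translates exactly to (2b). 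The reverse implication $(2) \Rightarrow (1)$ reconstructs $I := (\alpha_1,\alpha_2,\alpha_3)$ and $J := (\alpha_1, \alpha_2, a'_{13}(\alpha_3))$ and runs the computation backwards.

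For $(2) \Leftrightarrow (3)$, set $x := h_1^{-1}$. Then $x = h_2 h_3$ and $x \in H_1$, so $x \in H_1 \cap H_2 H_3$; conversely, any such $x$ produces a configuration satisfying (2a) via $h_1 := x^{-1}$. If $x = h_2' h_3'$ with $h_2' \in H_1 \cap H_2$, then $h_3' = (h_2')^{-1} x \in H_1 \cap H_3$ is automatic, so the condition $x \notin (H_1 \cap H_2) H_3$ corresponds exactly to (2b). Both directions follow at once.

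The only real obstacle, and it is a mild one, lies in the bookkeeping for $(1) \Leftrightarrow (2)$: there are several equally valid ways of absorbing the auxiliary elements $a'_{13}, a'_{23}$ into $h_1, h_2, h_3$, and the asymmetric form of condition (2) - with $h_1 h_2 h_3 = 1$ and the requirement that both $h_2'$ and $h_3'$ lie inside $H_1$ - demands a specific such choice. Once this choice is fixed, every step reduces to a short formal verification involving only membership in the stabilizers $H_i$.
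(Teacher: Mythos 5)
A preliminary remark: this paper does not actually prove the lemma --- it is quoted from \cite[Lemma~2.4]{npg_ppg} --- so your argument can only be judged on its own terms. Your overall strategy is the natural (and surely the intended) one: use $2$-relatedness to replace $J$ by a tuple agreeing with $I$ in the first two coordinates, translate $3$-relatedness into a condition on the stabilizers, and pass between (2) and (3) by setting $x=h_1^{-1}$. Your proof of $(2)\Leftrightarrow(3)$ is complete and correct (and convention-free, being pure group theory).

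The gap is in $(1)\Leftrightarrow(2)$, where you mix the paper's right-action convention with left-action (function-composition) notation, and as a result two displayed claims are false under \emph{every} consistent reading. The paper's action is on the right, written $I^g$. Under that convention your elements $a'_{13}=a_{13}a_{12}^{-1}$ and $a'_{23}=a_{23}a_{12}^{-1}$ do lie in $H_1$ and $H_2$ and do send $\alpha_3$ to $J_3$; but then the element fixing $\alpha_3$ is $a'_{13}(a'_{23})^{-1}$ (first $a'_{13}$, then $(a'_{23})^{-1}$), not $(a'_{23})^{-1}a'_{13}$ as you claim --- the latter applies $(a'_{23})^{-1}$ to $\alpha_3$ first, and nothing is known about where that sends $\alpha_3$. (If instead you read products as functions composed right-to-left, then $(a'_{23})^{-1}a'_{13}\in H_3$ is fine, but the memberships $a'_{13}\in H_1$ and $a'_{23}\in H_2$ fail; those would require $a_{12}^{-1}a_{13}$ and $a_{12}^{-1}a_{23}$.) So your $h_3:=(a'_{23})^{-1}a'_{13}$ is not known to lie in $H_3$, even though the product $h_1h_2h_3$ does telescope to $1$. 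The same slip recurs in your treatment of (2b): writing $g=h_1^{-1}h_3'$, i.e.\ $h_3'=h_1g$, and taking $h_2':=g$, the product $h_1h_2'h_3'$ equals $(h_1g)^2$, which need not be $1$; the correct companion is $h_3'=(h_1g)^{-1}$. Both errors are repairable with no new idea: under the right action take $h_1=a'_{13}$, $h_2=(a'_{23})^{-1}$, $h_3=a'_{23}(a'_{13})^{-1}$, and in (2b) replace $h_1g$ by its inverse (harmless for the existence statement, since $H_1\cap H_3$ is closed under inversion). But as written, the verification of $(1)\Leftrightarrow(2)$ does not go through.
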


There is a particular case in which these conditions can be replaced by easier ones. To state this, let us assume that $G$ again acts on $\Omega$, and write $G_\Lambda$ for the pointwise stabilizer of $\Lambda \subset \Omega$. Define a subset $\Lambda$ of $\Omega$ to be {\em independent} when $\Lambda' < \Lambda \implies G_\Lambda < G_{\Lambda'}$. The {\em height} of the action, denoted by $\Height(G, \Omega)$, is the maximal size of an independent set. An important example is provided by the action of $G$ on the cosets of $H$, a {\em trivial intersection subgroup}, or {\em TI-subgroup} of $G$, meaning that $H \cap H^g$ is either $H$ or $\{ 1 \}$, for $g \in G$. In this situation, the height is $1$ if $H$ is normal in $G$ and $2$ otherwise. 

The connection between height and relational complexity is given by the following lemma from \cite{gls_binary}:

\begin{lem} \label{lem-ineq-RC-height}
Let $G$ act on $\Omega$. Then 
\[ \RC(G, \Omega) \le \Height(G, \Omega) + 1 \, .  \tag*{$\square$}\]
\end{lem}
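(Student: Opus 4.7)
The plan is to set $h=\Height(G,\Omega)$ and to show that any $n\ge h+1$ and any pair $I,J\in\Omega^n$ with $I\stb{h+1}J$ satisfy $I\stb{n}J$; I will do this by exhibiting a single $g\in G$ with $g\cdot I_i=J_i$ for every $i$.

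The key construction is a minimal ``base'' $B\subseteq\{1,\ldots,n\}$ with the property $G_\Lambda=G_{\{I_1,\ldots,I_n\}}$, where $\Lambda=\{I_i:i\in B\}$. The first task is to show that $\Lambda$ is independent in the sense of the paper, so that $|B|=|\Lambda|\le h$. Suppose $\Lambda'\subsetneq\Lambda$ with $G_{\Lambda'}=G_\Lambda$; then $B':=\{i\in B:I_i\in\Lambda'\}$ is a proper subset of $B$ enjoying the same equalising property $G_{\{I_i:i\in B'\}}=G_{\{I_1,\ldots,I_n\}}$, contradicting the minimality of $B$. The same argument forbids repeated values among the $I_i$ with $i\in B$, whence $|B|=|\Lambda|$.

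Next I exploit $I\stb{h+1}J$. Since $n\ge h+1$, any size-$r$ choice of indices with $r\le h+1$ extends to a size-$(h+1)$ choice, so $I\stb{r}J$ for every $r\le h+1$. Applying this with $r=|B|$, pick $g\in G$ with $g\cdot I_i=J_i$ for every $i\in B$. For any $k\notin B$, applying it with $r=|B|+1\le h+1$ to the index set $B\cup\{k\}$ yields $g_k\in G$ with $g_k\cdot I_i=J_i$ for $i\in B\cup\{k\}$. Then $g^{-1}g_k$ pointwise fixes $\Lambda$, hence lies in $G_\Lambda=G_{\{I_1,\ldots,I_n\}}$; the latter fixes $I_k$ by the very definition of pointwise stabilizer, so $g^{-1}g_k\cdot I_k=I_k$, that is $g\cdot I_k=g_k\cdot I_k=J_k$. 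Thus the single element $g$ sends $I$ to $J$ on every coordinate, which gives $\RC(G,\Omega)\le h+1$.

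The main conceptual step is the choice of $B$: its minimality simultaneously forces both that the set $\Lambda$ is independent, thereby bounding $|B|$ by the height, and that $G_\Lambda$ already fixes every entry of the tuple, so that the ``error'' $g^{-1}g_k$ automatically fixes the coordinate we need to transfer. Once $B$ is identified the rest is a routine application of the definition of $(h+1)$-relatedness.
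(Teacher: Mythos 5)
Your proof is correct. Note that the paper does not prove this lemma at all---it is imported verbatim from \cite{gls_binary} (hence the $\square$ appended to the statement)---and your argument (take an inclusion-minimal set $B$ of coordinates whose pointwise stabilizer equals that of the full tuple, show the corresponding set of entries is independent and hence of size at most $\Height(G,\Omega)$, then use $(|B|+1)$-relatedness to transfer each remaining coordinate, the point being that the ``error'' element lies in $G_\Lambda = G_{\{I_1,\ldots,I_n\}}$) is precisely the standard base-and-independence proof from that source, so there is nothing to compare beyond recording the match.
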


One can use this to improve Lemma \ref{lem-criterion-2-not-implies-3}:

\begin{lem} \label{lem-RC-height-2}
Suppose that $G$ acts on $\Omega$, the set of cosets of a TI-subgroup $H$. Then the conditions of Lemma \ref{lem-criterion-2-not-implies-3} are also equivalent to: \begin{enumerate}
\item[4.] The action is not binary.
\item[5.] There are conjugates $H_1, H_2$ and $H_3$ of $H$ which are distinct and satisfy $H_1 \cap H_2 \cdot H_3 \ne \{ 1 \}$.
\end{enumerate}
\end{lem}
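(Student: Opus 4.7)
The equivalence of conditions $(1)$, $(2)$ and $(3)$, as well as the fact that they imply $(4)$, is already contained in Lemma~\ref{lem-criterion-2-not-implies-3}, so the plan is to establish the two remaining implications $(4)\Rightarrow(1)$ and $(3)\Leftrightarrow(5)$.

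For $(4)\Rightarrow(1)$, I would combine Lemma~\ref{lem-ineq-RC-height} with the observation recorded just above it, namely that the action of $G$ on the cosets of a TI-subgroup has height at most $2$. Consequently $\RC(G,\Omega)\le 3$, and if the action is not binary then $\RC(G,\Omega)=3$. By definition this yields some $n$-tuples $I,J\in\Omega^n$ with $I\stb{2}J$ yet $I\nstb{n}J$. Since $\RC(G,\Omega)=3$ also forces any $3$-related pair to be $n$-related, we must have $I\nstb{3}J$; that is, there exist indices $k_1<k_2<k_3$ whose sub-tuples are $2$-related but not $3$-related, and restricting to these indices produces the $3$-tuples required by condition~$(1)$.

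For $(3)\Leftrightarrow(5)$, the plan is a short direct calculation exploiting the TI property. Given conjugates $H_1,H_2,H_3$ of $H$, the TI condition forces $H_i\cap H_j$ to be either $H_i$ (if $H_i=H_j$) or $\{1\}$ (otherwise). A case-by-case check shows that if any two of the $H_i$ coincide, both sides of the inclusion in $(3)$ are equal, so $(3)$ fails; hence $(3)$ already forces the $H_i$ to be pairwise distinct. With distinctness in hand, $H_1\cap H_2=H_1\cap H_3=\{1\}$, so the right-hand side $H_1\cap(H_1\cap H_2)H_3=H_1\cap H_3=\{1\}$, and condition~$(3)$ reduces to $H_1\cap H_2H_3\ne\{1\}$, which is~$(5)$. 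Conversely, given three distinct conjugates $H_i=H^{g_i}$ satisfying~$(5)$, one picks $\alpha_i=Hg_i^{-1}$ and reads the same calculation backwards to recover~$(3)$.

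The only delicate point, such as it is, is the bookkeeping around distinctness in the second step: one must verify for each pair $(i,j)$ that the coincidence $H_i=H_j$ trivialises the inclusion in~$(3)$. This is routine but easy to overlook, and it is what makes condition~$(5)$ a genuine simplification of~$(3)$ rather than just a restatement.
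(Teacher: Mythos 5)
Your proposal is correct and follows essentially the same route as the paper's proof: both use the observation that the height of the action on cosets of a TI-subgroup is at most $2$ together with Lemma~\ref{lem-ineq-RC-height} to force $\RC(G,\Omega)=3$ when the action is not binary, and then extract a $2$-related but not $3$-related pair of triples (your direct contrapositive phrasing is just a rewording of the paper's argument by contradiction). Your treatment of $(3)\Leftrightarrow(5)$ also matches the paper's: the same case-check that any coincidence among the $H_i$ trivialises the inclusion in $(3)$, followed by the computation $H_1\cap(H_1\cap H_2)\cdot H_3=\{1\}$ for distinct conjugates under the TI hypothesis.
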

\begin{proof}
As just pointed out, the height of the action is $2$, and so the complexity is $2$ or $3$, by the previous lemma.

Now, the conditions of Lemma \ref{lem-criterion-2-not-implies-3} imply (4). Conversely, if (4) holds, then the complexity is $3$ by Lemma~\ref{lem-ineq-RC-height}. This implies (1). Indeed, if we were to suppose that (1) fails, then any two $n$-tuples which are $2$-related would be $3$-related, and thus they would be $n$-related also as the complexity is $3$, but then by definition this would show that the action is binary.

Now assume (5). By assumption we have $H_1 \cap H_2 = \{ 1 \}$ as well as $H_1 \cap H_3 = \{ 1 \}$, and we see that $H_1 \cap H_2 \cdot H_3$ is not included in $H_1 \cap (H_1 \cap H_2) \cdot H_3 = \{ 1 \}$, so we have condition (3).

Finally, assume (3). To prove that we have (5), we only need to check that the $H_i$'s are distinct. If we had $H_2 = H_3$, then $H_1 \cap H_2 \cdot H_3 = H_1 \cap H_2$ would be included in $H_1 \cap (H_1 \cap H_2) \cdot H_3$, which is absurd. If we had $H_1 = H_3$, then $H_1 \cap H_2 \cdot H_3 = H_1$ and $H_1 \cap (H_1 \cap H_2) \cdot H_1 = H_1$, another contradiction. Assuming $H_1 = H_2$ similarly leads to an absurd conclusion.
\end{proof}

\subsection{\texorpdfstring{The graph $\Gamma(\CC)$}{The graph Gamma(C)}}

Let $\CC$ be a conjugacy class of $p$-elements in $G$. We define $\Gamma(\CC)$ to be the graph whose vertices are the elements of $\CC$, and with an edge between $x, y \in \CC$ if and only if \begin{enumerate}
\item $x$ and $y$ commute, 
\item either $xy^{-1} \in \CC$ or $yx^{-1} \in \CC$.
\end{enumerate}

\begin{ex}
While studying the group $\PSU_3(q)$, we shall encounter an example where $\Gamma(\CC)$ is isomorphic to the set of nonzero cubes in $\f_q$ (where $q$ is even in this instance), with an edge between $x$ and $y$ if and only if $x+y$ is also a cube: see Lemma \ref{lem-cubes}. A little later, as we turn our attention to $\PSL_2(q)$, we shall be in a situation where a certain subgraph of $\Gamma(\CC)$ is similarly defined with squares instead of cubes: see Lemma \ref{l: p elements}. It is easy to produce more examples of this kind, close cousins of the familiar Payley graphs, by considering a semidirect product $G= \f_q \rtimes \f_q^\times$, with the action of $\f_q^\times$ of your choice.
\end{ex}

Some vocabulary in order to state the main result. When $g \in \CC$, the {\em component group of $g$ in $\Gamma(\CC)$} is the subgroup of $G$ generated by all the elements in the connected component of $\Gamma(\CC)$ containing $g$. The {\em component groups of $\Gamma(\CC)$} are the various groups thus obtained by varying $g$ ; they are all conjugate in $G$. 

The next theorem is Corollary 2.19 from \cite{npg_ppg}. It asserts that, in the case of transitive, binary actions, each stabilizer must contain a component group.

\begin{thm} \label{coro-connected-comp}
Let $G$ act on the set of cosets of a subgroup $H$, and assume that the action is binary. Let $p$ be a prime dividing $|H|$, and let $\CC$ be a conjugacy class of $p$-elements of $G$ of maximal $p$-fixity. Then for any $g \in \CC \cap H$, the component group of $g$ in $\Gamma(\CC)$ is contained in $H$.
\end{thm}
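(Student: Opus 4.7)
The strategy is to argue by induction on distance in $\Gamma(\CC)$: once we know that any element of $\CC$ adjacent to an element of $\CC \cap H$ already lies in $H$, iteration shows that every vertex in the connected component of $g$ lies in $H$, and hence the component group, being generated by these vertices, is contained in $H$. Thus the proof reduces to the following one-step claim: if $g \in \CC \cap H$ and $g' \in \CC$ is adjacent to $g$ in $\Gamma(\CC)$, then $g' \in H$.

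To set this up, recall that $g$ and $g'$ commute and, after possibly interchanging the roles of $gg'^{-1}$ and $g'g^{-1}$ in the definition of an edge, we may assume $g'' := g'g^{-1} \in \CC$; the three elements $g, g', g''$ then pairwise commute and satisfy the identity $g \cdot g'' = g'$, i.e.\ $g \cdot g'' \cdot (g')^{-1} = 1$. Suppose for contradiction that $g' \notin H$. Let $\alpha_0 = H \in \Omega$ and $\alpha_1 = \alpha_0 \cdot g'$, so $\alpha_0 \neq \alpha_1$. A short computation using $g \in H$ and $[g,g']=1$ shows that $g$ also fixes $\alpha_1$, while $g''$ carries $\alpha_0$ to $\alpha_1$.

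The plan is now to contradict binariness via Lemma~\ref{lem-criterion-2-not-implies-3}(2). Take $h_1 = g$, $h_2 = g''$, $h_3 = (g')^{-1}$, so that $h_1 h_2 h_3 = 1$. Let $\beta_1 = \alpha_0$ with $H_1 = \stab(\beta_1) = H$, giving $h_1 \in H_1$. The maximal $p$-fixity of $\CC$ tells us that any element of $\CC$ has exactly $|\Fix(g)| > 0$ fixed points (since $g \in \CC \cap H$ witnesses that this number is positive), so we may choose $\beta_2 \in \Fix(g'')$ and $\beta_3 \in \Fix(g')$ and set $H_2 = \stab(\beta_2)$, $H_3 = \stab(\beta_3)$.

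The main obstacle is then the verification of clause (b) of Lemma~\ref{lem-criterion-2-not-implies-3}(2): for some choice of $\beta_2$ and $\beta_3$, there must be no ``shortcut'' $h_2' \in H_1 \cap H_2$, $h_3' \in H_1 \cap H_3$ with $h_1 h_2' h_3' = 1$. This is exactly where the maximal-$p$-fixity hypothesis must be used in an essential way. I expect the argument to run by contradiction on the shortcut: from $h_2' \in H \cap H_2$ and $h_3' \in H \cap H_3$ satisfying $g = (h_3')^{-1}(h_2')^{-1}$, one should be able to replace $g''$ and $(g')^{-1}$ by conjugates of them lying already in $H$, then combine with the identity $g \cdot g'' \cdot (g')^{-1} = 1$ and the commutativity of $g,g',g''$ to exhibit a $p$-element of $H$ whose fixed-point set strictly contains $\Fix(g) \cup \{\alpha_1\}$, contradicting the maximality of $|\Fix(g)|$. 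Making this rigorous — in particular, finding the right $\beta_2, \beta_3$, presumably related to the orbit of $\alpha_0$ under $\langle g' \rangle$ on which $g$ acts trivially — is the technical heart of the proof.
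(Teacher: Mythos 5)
There is a preliminary point: this paper does not actually prove this theorem --- it is quoted from \cite{npg_ppg}, where it appears as Corollary 2.19 --- so your attempt has to stand entirely on its own. The parts you carry out are correct: the reduction to the one-step claim is valid (the component group is generated by the vertices of the component, and along a path each newly reached vertex lies in $\CC\cap H$, so induction applies); the identity $g\cdot g''\cdot (g')^{-1}=1$ holds because $g$ and $g'$ commute; $h_1=g$ lies in $H=\stab(\alpha_0)$; and $\Fix(g'')$, $\Fix(g')$ are non-empty because fixity is constant on a conjugacy class and $g$ fixes $\alpha_0$. But you stop exactly where the theorem lives: clause (b) of Lemma~\ref{lem-criterion-2-not-implies-3}(2) --- that for \emph{some} choice of $\beta_2,\beta_3$ no shortcut exists --- is never verified, and you say yourself that making it rigorous is the ``technical heart''. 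Since nothing written so far uses binarity or maximal $p$-fixity in an essential way, this is a genuine gap: what remains is the entire content of the theorem, not a technicality.

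Moreover, the sketch you give for filling it would not go through as imagined. First, clause (b) quantifies over \emph{all} $h_2'\in H\cap H_2$ and $h_3'\in H\cap H_3$; these are arbitrary elements of $H$, in general not $p$-elements and not conjugates of $g''$ or $(g')^{-1}$, so the proposed step ``replace $g''$ and $(g')^{-1}$ by conjugates of them lying already in $H$'' has no justification. Second, shortcuts really do exist for careless choices even when $g'\notin H$: if $\beta_2\in\Fix(g)$ then $h_2'=g^{-1}$, $h_3'=1$ is a shortcut, and similarly if $\beta_3\in\Fix(g)$; so you must at least take $\beta_2\in\Fix(g'')\smallsetminus\Fix(g)$ and $\beta_3\in\Fix(g')\smallsetminus\Fix(g)$, and nothing in the proposal rules out further shortcuts for such choices. (Note in passing that if $\Fix(g'')\subseteq\Fix(g)$ then, the two sets having equal size, $\Fix(g'')=\Fix(g)\ni\alpha_0$, whence $g''\in H$ and $g'=g''g\in H$ directly --- so the substantive case is precisely a comparison of fixed-point sets.) Third, a small slip: you computed that $g$ fixes $\alpha_1$, so $\alpha_1\in\Fix(g)$ and the hoped-for $p$-element ``whose fixed-point set strictly contains $\Fix(g)\cup\{\alpha_1\}$'' is simply one with more than $|\Fix(g)|$ fixed points; that is indeed the right shape of contradiction against maximal $p$-fixity, but no construction of such an element is offered. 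For comparison, the argument in \cite{npg_ppg} avoids the triple criterion altogether: in essence it shows that, in a binary action with $\CC$ of maximal $p$-fixity, adjacent vertices of $\Gamma(\CC)$ have \emph{equal} fixed-point sets, after which the theorem is immediate, since every vertex of the component of $g$ then fixes the trivial coset and so lies in $H$. The degenerate case noted above is a sign that fixed-point sets, rather than shortcut bookkeeping, are the natural vehicle here.
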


If particular, suppose that $\Gamma(\CC)$ is connected and that $G$ is simple. Then $H$ must contain the subgroup generated by $\CC$, which is normal, and we conclude in this situation that $H=G$.

\begin{ex}\label{ex: edgeless}
While most uses of the theorem in this paper will exploit the fact that $\Gamma(\CC)$ has sometimes large connected components, thus preventing the stabilizers in certain binary actions from being too small, and thereby giving an obstruction to the very existence of nontrivial binary actions, here is an easy situation where we can do the opposite.  Suppose that $\CC$ is a conjugacy class of involutions in a group $G$, let $h\in \CC$ and $H=\langle h\rangle$, and suppose that $\Gamma(\CC)$, the graph described above, does not have a single edge. Then it is an easy consequence of Lemma~\ref{lem-RC-height-2} that in this case the action of $G$ on the set of right cosets of $H$ in $G$ is binary.

To see that such actions exist, let $G=\PSL_n(q)$ with $nq$ odd and take $\mathcal{C}$ to be the conjugacy class of involutions that are the projective image of a matrix in $\SL_n(q)$ with a $(-1)$-eigenspace of dimension $n-1$ and a $1$-eigenspace of dimension $1$. It is easy to see that if $g,h\in \CC$ then $gh$ has a $1$-eigenspace of dimension at least $n-2$. 

Thus, for $n\geq 5$, the graph $\Gamma(\CC)$ is edgeless and the group $\PSL_n(q)$ has a transitive binary action with a point-stabilizer of order $2$.

The problem of classifying all simple groups $G$ with a conjugacy class $\CC$ of involutions for which $\Gamma(\CC)$ is edgeless is a tantalising one!
\end{ex}

\subsection{A few lemmas}

We shall need four lemmas from \cite{gls_binary}.  

\begin{lem}\cite[Lemma~1.6.2]{gls_binary}\label{lem-intermediate-binary}
Let $H < B < G$. Then $\RC(G, (G:H)) \ge \RC(B, (B:H))$. In particular, if the action of $G$ on $(G:H)$ is binary, so is the action of $B$ on $(B:H)$. 
\end{lem}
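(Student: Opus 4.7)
The plan is to identify the $B$-action on $(B:H)$ with the restriction of the $G$-action on $(G:H)$ to a $B$-invariant subset, and then verify that the defining inequality for the relational complexity transfers cleanly.

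First I would fix the notation $\Omega = (G:H)$, $\Omega_0 = (B:H)$, and observe that the map $Hb \mapsto Hb$ gives a $B$-equivariant injection $\Omega_0 \hookrightarrow \Omega$, with image the $B$-orbit of $H$ in $\Omega$. Under this identification every $n$-tuple $I \in \Omega_0^n$ is in particular an $n$-tuple in $\Omega^n$. Set $k = \RC(G, \Omega)$. The goal is to prove that if $I, J \in \Omega_0^n$ are $k$-related for the $B$-action (in the sense of $\stb{k}$ as defined earlier), then they are $n$-related for the $B$-action, which will give $\RC(B, \Omega_0) \le k$.

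Next I would check the two directions. For each choice of $k$ indices $1 \le k_1 < \dots < k_r \le n$, the hypothesis $I \stb{k} J$ under $B$ supplies some $b \in B$ matching the corresponding entries; viewed inside $G$ this same $b$ witnesses $k$-relatedness under $G$. Since $\RC(G, \Omega) = k$, there then exists $g \in G$ with $I^g = J$ as tuples in $\Omega$.

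The only subtle point, and what I would call the main (mini-)obstacle, is to argue that this $g$ automatically lies in $B$, so that it also witnesses $n$-relatedness for the $B$-action. For this I would use the fact that $H \subset B$: writing $I_1 = H b_1$ and $J_1 = H b_1'$ with $b_1, b_1' \in B$, the equality $H b_1 g = H b_1'$ gives $g \in b_1^{-1} H b_1' \subset B$. Hence $g \in B$, and $I^g = J$ as tuples in $\Omega_0$. This proves $\RC(B, \Omega_0) \le \RC(G, \Omega)$, and the ``in particular'' clause is immediate since $\RC \ge 2$ always, so if the left-hand side equals $2$, the right-hand side must also equal $2$.
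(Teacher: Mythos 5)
Your proof is correct. Note that the paper does not actually prove this statement --- it is quoted directly from \cite[Lemma~1.6.2]{gls_binary} --- so there is no in-paper argument to compare against; yours is a valid self-contained proof along the standard lines. The genuinely delicate point is exactly the one you isolate: after identifying $(B:H)$ $B$-equivariantly with the $B$-orbit of the trivial coset in $(G:H)$ and transferring $k$-relatedness (both routine), one must show that the element $g \in G$ carrying the whole tuple can be taken in $B$, and your observation that $Hb_1g = Hb_1'$ forces $g \in b_1^{-1}Hb_1' \subseteq B$ (using $H \le B$) is precisely what closes the argument; the deduction $\RC(B,(B:H)) \le \RC(G,(G:H))$ and the ``in particular'' clause then follow as you say.
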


\begin{lem}\cite[Lemma~1.7.1]{gls_binary}\label{l: point stabilizer}
 Let $G$ be transitive on $\Omega$ and let $M$ be a point-stabilizer in this action. Let $\Lambda$ be a non-trivial orbit of $M$. Then
 \[
  \RC(G,\Omega) \geq \RC(M,\Lambda).
 \]
\end{lem}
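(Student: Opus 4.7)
The idea is to reduce the relational complexity of the $M$-action on $\Lambda$ to that of the $G$-action on $\Omega$ by augmenting tuples with the basepoint fixed by $M$. Set $k = \RC(G, \Omega)$; the goal is to show $\RC(M,\Lambda) \le k$, which amounts to proving that for every $n \ge k$ and every pair $I, J \in \Lambda^n$, if $I \stb{k} J$ under $M$ then $I \stb{n} J$ under $M$.

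Fix $\alpha \in \Omega$ with $M = G_\alpha$ and, given such $I$ and $J$, form the augmented $(n+1)$-tuples $I' = (\alpha, I_1, \ldots, I_n)$ and $J' = (\alpha, J_1, \ldots, J_n)$ in $\Omega^{n+1}$. The first step I would carry out is to verify $I' \stb{k} J'$ for the action of $G$. This splits into two cases depending on the $k$ chosen coordinates from $\{0, 1, \ldots, n\}$: if the $0$th coordinate is not among them, any element of $M$ witnessing the relation $I \stb{k} J$ under $M$ also lies in $G$ and does the job; if the $0$th coordinate is among the chosen ones, I need $g \in G$ with $\alpha^g = \alpha$ and matching the remaining $k-1$ coordinates, and since $I \stb{k} J$ under $M$ trivially gives $I \stb{k-1} J$ under $M$, such an element of $M \subseteq G$ is available.

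With $I' \stb{k} J'$ in hand, the hypothesis $\RC(G,\Omega) = k$ upgrades this to $(n+1)$-relatedness under $G$: there exists $g \in G$ with $(I')^g = J'$. Comparing the $0$th coordinates forces $\alpha^g = \alpha$, so $g \in M$; comparing the remaining coordinates then yields $I^g = J$ with $g \in M$, which is exactly the $n$-relation under $M$ that we need.

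I do not anticipate a real obstacle here: the argument is essentially bookkeeping on which coordinates are selected, and the non-triviality of $\Lambda$ is invoked only to guarantee that $\RC(M,\Lambda)$ is defined in the usual sense (avoiding the degenerate single-point case). The one point requiring care, handled above, is that whenever position $0$ is selected the required witness must be in $M$ rather than in $G$ at large, and this is exactly what the choice of $\alpha$ as the $M$-fixed basepoint guarantees.
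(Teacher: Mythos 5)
Your proof is correct: the basepoint-augmentation argument (prepending the $M$-fixed point $\alpha$ to both tuples, checking $k$-relatedness of the augmented tuples by splitting on whether position $0$ is selected, then using $\RC(G,\Omega)=k$ to get a global $g$ which must lie in $G_\alpha=M$) is exactly the standard proof of this lemma. The paper itself offers no proof, citing Lemma~1.7.1 of \cite{gls_binary} instead, and your argument is essentially the one given in that source, so there is nothing to correct or add.
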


\begin{lem}\cite[Lemma~1.8.1]{gls_binary}\label{l: frobenius}
Let $G$ be a Frobenius permutation group on $\Omega$ (that is, $G$ acts transitively on $\Omega$, $G_\omega\ne 1$ for every $\omega\in \Omega$ and $G_{\omega\omega'}=1$ for every $\omega,\omega'\in \Omega$ with $\omega\ne \omega'$). If $G$ is binary, then a Frobenius complement has order equal to $2$.
\end{lem}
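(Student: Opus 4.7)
The plan is to apply Lemma~\ref{lem-RC-height-2}: by the Frobenius assumption, $H$ is a TI-subgroup of $G$, so criterion~(5) of that lemma reduces the problem to exhibiting, whenever $|H|\ge 3$, three distinct conjugates $H_1, H_2, H_3$ of $H$ with $H_1 \cap H_2 H_3 \ne \{1\}$. Equivalently, via condition~(2) of Lemma~\ref{lem-criterion-2-not-implies-3} together with the trivial intersection of distinct stabilizers, I need to produce three distinct points $\alpha_i \in \Omega$ and nontrivial elements $h_i \in G_{\alpha_i}$ satisfying $h_1 h_2 h_3 = 1$; here the TI property collapses clause~(2b) to the single requirement $h_1 \ne 1$.

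To build such a witness, I invoke Frobenius's theorem to write $G = N \rtimes H$ with $N$ the Frobenius kernel. Let $\omega$ be the point fixed by $H$ and set $\alpha_1 := \omega$. The hypothesis $|H| \ge 3$ permits the choice of $h_1 \in H \setminus \{1\}$ together with some $h \in H \setminus \{1, h_1\}$. For any $n \in N \setminus \{1\}$, set $\alpha_2 := \omega n$ (so that $G_{\alpha_2} = n^{-1} H n$), $h_2 := n^{-1} h^{-1} n$, and finally $h_3 := h_2^{-1} h_1^{-1}$, which automatically satisfies $h_1 h_2 h_3 = 1$.

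It then remains to check that $h_3 \ne 1$; that $h_3 \notin N$, so that $h_3$ fixes a unique point $\alpha_3$; and that $\alpha_3 \ne \omega$ and $\alpha_3 \ne \omega n$. Each claim is a short computation in the semidirect product: writing $h_3$ in its canonical form $n_0 k$ with $n_0 \in N$ and $k \in H$, the $H$-component turns out to be $k = h h_1^{-1}$, nontrivial because $h \ne h_1$. This immediately gives $h_3 \ne 1$ and $h_3 \notin N$. The inequalities $\alpha_3 \ne \omega$ and $\alpha_3 \ne \omega n$ each reduce to the statement that a particular nontrivial element of $H$ fails to commute with the nontrivial element $n \in N$, which is automatic from the defining property of a Frobenius group.

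The only delicate point is the bookkeeping needed to track the $H$-component of iterated conjugations such as $n h_1^{-1} n^{-1}$ and extract the canonical decomposition of $h_3$. No step is conceptually difficult, but the three distinctness checks must be organized carefully so that each one distils down to a single ``no nontrivial element of $H$ commutes with $n$'' statement, which is precisely what Frobenius forbids.
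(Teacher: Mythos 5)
Your proof is correct. The reduction is sound: in a Frobenius group the point stabilizers form a TI family, so by condition (5) of Lemma~\ref{lem-RC-height-2} (equivalently, clause (2) of Lemma~\ref{lem-criterion-2-not-implies-3}, which the trivial pairwise intersections collapse to ``$h_1\ne 1$'') it suffices to exhibit one witness triple when $|H|\ge 3$; and your witness works, since writing $h_3=n^{-1}hn\,h_1^{-1}=\bigl(n^{-1}\cdot hnh^{-1}\bigr)\cdot\bigl(hh_1^{-1}\bigr)$ in the canonical form of $G=N\rtimes H$ shows its $H$-component is $hh_1^{-1}\ne 1$ (so $h_3\ne 1$, $h_3\notin N$), while $h_3\in H$ would force $[h,n]=1$ and $h_3\in H^n$ would force $h_1\in H\cap H^n=\{1\}$, both impossible in a Frobenius group.

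However, you should know that you are not shadowing a proof that exists in this paper: the lemma is imported verbatim from \cite[Lemma~1.8.1]{gls_binary}, and the closest argument the paper contains is the proof of Lemma~\ref{lem-pseudo-frobenius}, which the authors explicitly describe as an elaboration of Wiscons's original proof of the present statement. That argument is genuinely different from yours. It also starts from $G=N\rtimes H$ (so both routes rest on Frobenius's theorem) and identifies the coset space with $N$, but instead of producing a single explicit violating triple it studies, for a fixed $a_0$, the orbit sets $X_a=a_0\cdot G_a\smallsetminus\{a_0\}$ and uses the binary hypothesis (via $2$-related-but-not-$3$-related triples of the form $(a_0,a,b)$, $(a_1,a,b)$) to force these sets to be pairwise disjoint; summing cardinalities then yields an inequality of the shape $(|N|-2)(|H|-1)\le|N|-1$, which forces $|H|=2$. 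Your construction buys brevity and transparency: it pinpoints a concrete pair of triples witnessing the failure of binariness, and it needs nothing beyond the TI criterion already proved in this paper. The counting route buys generality: it is exactly what survives in the ``pseudo-Frobenius'' setting of Lemma~\ref{lem-pseudo-frobenius}, where the complement acts freely only on $N\smallsetminus K$ and no single witness triple of your kind is available a priori; there, only the aggregate disjointness argument gives the conclusion $T_0=T$ together with the quantitative bound $|T|\le 1+2|K|$, which is what the paper actually needs in its analysis of $\PSU_3(q)$.
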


In the next lemma we write $G_\Lambda$ for the setwise stabilizer of $\Lambda$, $G_{(\Lambda)}$ for the pointwise stabilizer of $\Lambda$ and $G^\Lambda=G_\Lambda/G_{(\Lambda)}$ for the permutation group induced by $G$ on $\Lambda$.

\begin{lem}\cite[Lemma~1.7.8]{gls_binary}\label{l: beautiful}
 Let $G$ act on $\Omega$ and suppose that $\Lambda\subset \Omega$. Suppose, moreover, that $G_\Lambda$, the setwise stabilizer of $\Lambda$ in $G$, acts 2-transitively on $\Lambda$. If the action of $G$ on $\Omega$ is binary, then the action of $G^\Lambda$ is the full symmetric group on $\Lambda$.
\end{lem}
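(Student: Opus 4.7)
The plan is to argue by contrapositive: assume $G^\Lambda$ is a proper subgroup of $\mathrm{Sym}(\Lambda)$, pick any permutation $\sigma\in \mathrm{Sym}(\Lambda)\setminus G^\Lambda$, and exhibit two $n$-tuples that are $2$-related but not $n$-related, where $n=|\Lambda|$. By the very definition of relational complexity (with $k=2$), this will show that the action of $G$ on $\Omega$ is not binary.

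Concretely, enumerate $\Lambda=\{\lambda_1,\dots,\lambda_n\}$ and form the tuples
\[
I=(\lambda_1,\lambda_2,\dots,\lambda_n)\in\Omega^n, \qquad J=(\sigma(\lambda_1),\sigma(\lambda_2),\dots,\sigma(\lambda_n))\in\Omega^n.
\]
To verify $I\stb{2} J$, fix any pair of indices $i<j$. Since $G^\Lambda$ acts $2$-transitively on $\Lambda$, there exists an element $g\in G_\Lambda\subseteq G$ which, viewed as a permutation of $\Lambda$, sends $\lambda_i\mapsto\sigma(\lambda_i)$ and $\lambda_j\mapsto\sigma(\lambda_j)$. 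This element witnesses $2$-relatedness on the indices $\{i,j\}$, so indeed $I\stb{2}J$.

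On the other hand, if $g\in G$ satisfied $I^g=J$, then $g$ would map each $\lambda_k$ to an element of $\Lambda$, hence would preserve $\Lambda$ setwise; thus $g\in G_\Lambda$, and its induced permutation in $G^\Lambda$ would be precisely $\sigma$, contradicting the choice of $\sigma$. Therefore $I\nstb{n}J$, and by the definition recalled in Section~\ref{s: background}, $\RC(G,\Omega)\ge 3$, so $G$ on $\Omega$ is not binary.

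The argument is essentially a one-step construction once the right test tuples are chosen, so there is no real obstacle: the only delicate point is to notice that listing all of $\Lambda$ in the two tuples forces any witnessing group element to stabilise $\Lambda$ setwise, which is exactly what converts the failure $\sigma\notin G^\Lambda$ into the failure of $n$-relatedness.
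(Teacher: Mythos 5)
Your proof is correct; note that the paper itself gives no proof of this lemma (it is quoted from \cite{gls_binary}), and your argument --- listing all of $\Lambda$ in the two tuples, using $2$-transitivity of $G_\Lambda$ to get $2$-relatedness, and observing that any witness to $n$-relatedness must stabilise $\Lambda$ setwise and induce $\sigma$ --- is exactly the standard argument for this statement. The only point worth adding is the degenerate case $|\Lambda|\le 2$, where your construction yields nothing but where the contrapositive hypothesis is vacuous anyway, since $2$-transitivity on at most two points already forces $G^\Lambda=\mathrm{Sym}(\Lambda)$.
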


\section{Groups with a single class of involutions}\label{s: one-class}

In this section we prove Theorem~\ref{t: se_strong}.

\subsection{Strongly embedded subgroups}

Consider a finite group $G$ containing a single class of involutions $\CC$. In this case $\Gamma(\CC)$ is the \emph{involution commuting graph} of $G$. 

We need a definition: a proper subgroup $N$ of $G$ is called \emph{strongly embedded} if $|N\cap N^g|$ is odd for every $g\in G\setminus N$. Now the following proposition follows immediately from the main result of Aschbacher in \cite{asch2}: 

\begin{prop}\label{p: ses}
Suppose that $G$ is a finite group containing a single class of involutions $\CC$, let $X$ be a connected component of $\Gamma(\CC)$ and let $N$ be the normalizer in $G$ of $X$. Either $X=\CC$ or else $N\cap \langle \CC\rangle$ is strongly embedded in $\langle \CC\rangle$.
\end{prop}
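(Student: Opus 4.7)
The plan is to reduce the statement directly to Aschbacher's main theorem from~\cite{asch2}, which identifies strongly embedded subgroups inside groups generated by a normal set of involutions whose commuting graph is disconnected.

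First I would observe that, under the assumption that $G$ has a single class of involutions, the graph $\Gamma(\CC)$ coincides with the involution commuting graph of $G$ on $\CC$. Indeed, if $g, h \in \CC$ are distinct and commute, then $(gh)^2 = g^2 h^2 = 1$ and $gh \neq 1$, so $gh$ is an involution; since $\CC$ is the unique class of involutions in $G$, necessarily $gh \in \CC$, and the extra condition ``$gh^{-1}$ or $hg^{-1}$ lies in $\CC$'' built into the definition of $\Gamma(\CC)$ (note $h^{-1}=h$) is automatically satisfied. Thus an edge in $\Gamma(\CC)$ is exactly a pair of commuting involutions.

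Next I would pass to the normal subgroup $L := \langle \CC\rangle$. Every involution of $L$ is an involution of $G$, hence lies in $\CC$, so $\CC$ is precisely the set of involutions of $L$; it is $L$-invariant, and $L = \langle \CC\rangle$ by construction. These are exactly the hypotheses under which Aschbacher's theorem applies to $L$ and to the commuting graph on $\CC$, yielding the conclusion that, whenever this graph is disconnected, the normalizer in $L$ of any connected component is a strongly embedded subgroup of $L$.

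Applied to our situation, the theorem gives that if $X \neq \CC$, then $N_L(X)$ is strongly embedded in $L$. Since $N_L(X) = N_G(X) \cap L = N \cap \langle \CC \rangle$, this is exactly the conclusion of the proposition. The only genuine content of the argument is the verification in the first step that the edge set of $\Gamma(\CC)$ coincides with the standard commuting relation on $\CC$ under the ``single class of involutions'' hypothesis; once this is noted, the result is an immediate citation of~\cite{asch2}, which is presumably why the authors announce that it follows ``immediately''.
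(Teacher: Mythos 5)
Your proposal is correct and takes essentially the same route as the paper: the paper's entire argument consists of observing that, under the single-class hypothesis, $\Gamma(\CC)$ is the involution commuting graph (your computation $(gh)^2 = g^2h^2 = 1$ for distinct commuting $g,h \in \CC$ is exactly why), and then citing the main result of Aschbacher in \cite{asch2}, which is stated for the subgroup $\langle \CC \rangle$ and the normalizer of a component, just as you use it. Your additional verifications --- that $\CC$ is precisely the set of involutions of $L = \langle \CC\rangle$, that it is a normal subset of $L$, and that $N_L(X) = N \cap \langle \CC \rangle$ --- are a faithful unpacking of why the citation applies ``immediately.''
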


Assume from now on that $G$ is simple, so that $\langle \CC \rangle = G$. The alternative of the proposition is thus: either $\Gamma(\CC)$ has a single component group which is all of $G$, or $N$ is strongly embedded in $G$.

However, the latter case is severely restricted. Indeed, the Bender-Suzuki theorem \cite{bender, suzuki, suzuki2}, together with Glauberman's $Z^*$-theorem \cite{glauberman}, yields the following:

\begin{prop}\label{p: ses2}
 Suppose that $G$ is a simple group containing a strongly embedded subgroup. Then $G$ is isomorphic to $\PSL_2(q)$, ${^2B_2}(q)$ or $\PSU_3(q)$ where $q=2^a$ and $a\geq 2$.
\end{prop}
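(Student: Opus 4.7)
The plan is to combine the two results explicitly cited in the surrounding text: the Bender-Suzuki classification and Glauberman's $Z^*$-theorem. First I would invoke Bender-Suzuki, which asserts that a finite group $G$ containing a strongly embedded subgroup satisfies one of two conditions: either its Sylow $2$-subgroups are cyclic or generalized quaternion, or else $G/O(G)$ has a normal subgroup of index at most $2$ isomorphic to $\PSL_2(2^a)$, $\PSU_3(2^a)$, or ${}^2B_2(2^{2a+1})$ for some appropriate $a$.

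The second step is to eliminate the cyclic/generalized quaternion branch using Glauberman. In such a group there is a unique involution $t$ in each Sylow $2$-subgroup (the central one in the quaternion case), and this $t$ is \emph{isolated}, meaning no conjugate of $t$ other than $t$ itself lies in $C_G(t)$. Indeed, if $s\in C_G(t)$ were an involution conjugate to $t$, then $\langle s,t\rangle$ would be an elementary abelian $2$-group contained in some Sylow $2$-subgroup of $G$, contradicting the fact that cyclic and generalized quaternion groups contain a single involution. The $Z^*$-theorem then forces $tO(G)$ to lie in $Z(G/O(G))$, so $\langle t\rangle O(G)$ is normal in $G$. Since $G$ is simple and of order greater than $2$ (it admits a proper subgroup of even order), both $O(G)$ and $Z(G)$ are trivial, and we get the desired contradiction.

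In the surviving branch of Bender-Suzuki, the simplicity of $G$ directly identifies $G$ with one of the three groups $\PSL_2(2^a)$, $\PSU_3(2^a)$ or ${}^2B_2(2^{2a+1})$, with no extension or odd kernel to worry about. A brief check that $\PSL_2(2)$, $\PSU_3(2)$ and ${}^2B_2(2)$ fail to be simple yields the stated bound $a\geq 2$. The main obstacle, such as it is, lies entirely in correctly citing these two heavy theorems and matching their hypotheses to the situation at hand; essentially all of the mathematical content is already encoded in Bender-Suzuki and $Z^*$, and what remains on our side is purely bookkeeping.
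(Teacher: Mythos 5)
Your proof is correct and takes essentially the same route the paper intends: the paper gives no argument beyond citing Bender--Suzuki and Glauberman's $Z^*$-theorem, and your write-up supplies exactly the missing bookkeeping (killing the cyclic/generalized quaternion branch via an isolated involution and simplicity, then collapsing the normal series in the other branch). One small inaccuracy: in the standard statement of Bender--Suzuki the quotient of $G/O(G)$ by the normal rank-one subgroup has \emph{odd order}, not index at most $2$; since simplicity of $G$ forces that normal subgroup to equal $G$ regardless, this misquotation does not affect your argument.
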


It is now easy to deduce:

\begin{prop}\label{p: se_weak}
Suppose that $G$ is a simple group acting on the set of cosets of a subgroup $H$ of even order. Assume that the action is binary, and that $H \ne G$. If $G$ has a single conjugacy class of involutions, then $G$ is isomorphic to $\PSL_2(q)$, ${^2\!B_2}(q)$ or $\PSU_3(q)$ where, in all three cases, $q=2^a$ and $a\geq 2$. Moreover, in all three cases, $H$ contains the centre of a Sylow $2$-subgroup of $G$.
\end{prop}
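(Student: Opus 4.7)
The plan is to apply Theorem~\ref{coro-connected-comp} to reduce the problem to a graph-theoretic property of $\Gamma(\CC)$, where $\CC$ is the unique involution class of $G$, and then combine Propositions~\ref{p: ses} and \ref{p: ses2} to pin $G$ down. The ``moreover'' statement is then obtained by identifying explicitly the component of $\Gamma(\CC)$ containing a chosen involution of $H$.

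Since $|H|$ is even and $\CC$ is the unique class of involutions, we may choose $g \in \CC \cap H$. Applying Theorem~\ref{coro-connected-comp} with $p=2$ to this class, the component group $K$ of $g$ in $\Gamma(\CC)$ is contained in $H$. Let $X$ be the connected component of $\Gamma(\CC)$ containing $g$, so that $K = \langle X \rangle$. By Proposition~\ref{p: ses}, either $X = \CC$, or $N_G(X)$ meets $\langle \CC \rangle$ in a subgroup strongly embedded in $\langle \CC \rangle$. Since $G$ is simple and $\CC$ is a nontrivial conjugacy class, $\langle \CC \rangle = G$; the first alternative would then give $K = G \leq H$, contradicting $H < G$. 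Hence $G$ admits a strongly embedded subgroup, and Proposition~\ref{p: ses2} forces $G$ to be one of $\PSL_2(q)$, ${}^2B_2(q)$ or $\PSU_3(q)$ with $q = 2^a$, $a \geq 2$.

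For the ``moreover'' clause, I would pin down $X$ in each of the three families using two standard structural facts. First, in a Sylow $2$-subgroup $S$ of $G$ the set of involutions coincides with $Z(S) \setminus \{1\}$: for $\PSL_2(2^a)$, $S$ is elementary abelian of order $q$; for ${}^2B_2(q)$ and $\PSU_3(2^a)$, $S$ has nilpotency class~$2$, with centre of order $q$ absorbing every involution of $S$. Second, two commuting involutions of $G$ always lie in a common Sylow $2$-subgroup, a consequence of the fact that the centralizer of an involution is contained in the strongly embedded subgroup $N$, which in each case is the normalizer of a Sylow $2$-subgroup. Given these, any two distinct elements of $Z(S) \setminus \{1\}$ are adjacent in $\Gamma(\CC)$ (their product lies again in $Z(S) \setminus \{1\} \subseteq \CC$), while no involution outside $Z(S)$ is adjacent to $g$; therefore $X = Z(S) \setminus \{1\}$, so $K = Z(S)$, and $Z(S) \leq H$.

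The main obstacle is the component analysis in the last paragraph: it requires invoking and citing classical, group-specific facts about the Sylow $2$-subgroups of these three families and the fact that $Z(S)$ forms a TI subgroup, which is a direct consequence of strong embedding. A minor technical point to settle at the outset is that the involution class $\CC$ really qualifies as a class of maximal $2$-fixity in Theorem~\ref{coro-connected-comp}; if this fails in the presence of other classes of $2$-elements of higher order, one can reroute the argument through a direct application of Lemma~\ref{lem-RC-height-2} to three well-chosen conjugates of $H$ containing involutions.
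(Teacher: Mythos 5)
Your proposal is correct and follows essentially the same route as the paper: Theorem~\ref{coro-connected-comp} combined with Propositions~\ref{p: ses} and~\ref{p: ses2} for the identification of $G$, and the fact that $Z(S)\setminus\{1\}$ is a clique in $\Gamma(\CC)$ (as $Z(S)$ is elementary abelian) for the ``moreover'' clause --- your exact determination of the component as $Z(S)\setminus\{1\}$ is more than is needed, since the containment $Z(S)\le K\le H$ suffices and is all the paper uses. The technical point you flag about maximal $2$-fixity is settled in one line rather than by rerouting: any $2$-element $x$ of order $2^k$ satisfies $\Fix(x)\subseteq\Fix(x^{2^{k-1}})$ and $x^{2^{k-1}}$ lies in the unique involution class $\CC$, so $\CC$ automatically has maximal $2$-fixity (your proposed fallback via Lemma~\ref{lem-RC-height-2} would in any case be problematic, since $H$ is not assumed to be a TI-subgroup).
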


\begin{proof}
Theorem \ref{coro-connected-comp} and Propositions~\ref{p: ses} and \ref{p: ses2} yield the first part. The only thing that needs to be proved is that, when $G$ is in one of the three listed families of simple groups, $H$ must contain the centre of a Sylow $2$-subgroup of $G$. This follows from the fact that the centre of a Sylow $2$-subgroup of $G$ is elementary-abelian and so its non-trivial elements are all connected to each other in $\Gamma(\CC)$ where $\CC$ is the unique class of involutions in $G$.
\end{proof}

Let us list some simple groups with a single class of involutions. First, the alternating groups and the sporadics (using \cite{atlas}):
\begin{equation}\label{e: sporadics}
 A_5, A_6, A_7, M_{11}, M_{22}, M_{23}, J_1, J_3, McL, O'N, Ly, Th.
\end{equation}
Second, groups of Lie type (using \cite{gls3}):
\begin{equation}\label{e: lie}
 \PSL_2(q), {^2B_2}(q), {^2G_2}(q), \PSU_3(q), G_2(q_o), {^3D_4}(q_o).
\end{equation}
(Here we write $q_o$ to mean that $q$ must be odd.) Note that we are not claiming that these lists are exhaustive. For instance \cite{atlas} tells us that $\PSU_4(3)$ has a single class of involutions. 

For any such group $G$, except the three families of exceptions listed in the proposition, we see that $G$ does not have a single transitive, binary action with proper stabilizers of even order. This result contributes to the general impression that binary actions of simple groups are rare.

At this point, however, we do meet actual examples of nontrivial binary actions, taking $G$ as imposed by the proposition, and $H$ to be the centre of a Sylow 2-subgroup.  To see this, let us turn to the proof of Theorem \ref{t: se_strong}

\subsection{The proof}

For convenience, we repeat the statement which was given in the introduction.

\begin{thm2}
Suppose that $G$ is a simple group that contains a single conjugacy class of involutions, let $H$ be a proper subgroup of $G$ of even order and let $\Omega$ be the set of right cosets of $H$ in $G$. Then the action of $G$ on $\Omega$ is binary if and only if we are in one of the following situations:
\begin{enumerate}
% \item $H$ is of odd order;
        \item $G=\PSL_2(2^a)$ with $a\geq 2$ and $H$ is a Sylow $2$-subgroup of $G$;
        \item $G={^2\!B_2}(2^{2a+1})$ with $a\geq 1$ and $H$ is the centre of a Sylow $2$-subgroup of $G$;
        \item $G=\PSU_3(2^a)$ with $a\geq 2$ and $H$ is the centre of a Sylow $2$-subgroup of $G$.
\end{enumerate}
\end{thm2}

% \begin{proof}

% Now suppose $|K| > 1$, so we may pick $k \in K$ with $k \ne 1$. For any $a \in A$, we have $T^{ka} \cap T^a = T_0$ as just observed, so $T_0 \subset T^a$, and thus it is impossible to find $b \in A$ with $T^b \cap T^a = \{ 1 \}$. We conclude that $A$ comprises just one coset of $K$, or in other words, $K$ has index $2$ in $N$.

% It also follows that $T_0 = T$, so that the action of $T$ on $K$ is trivial. Indeed, if it were not the case we would pick a prime $p$ dividing the order of $T/T_0$, and note that $|K| = 1$ mod $p$ while $|A|=0$ mod $p$, which is at odds with the fact that $|A| = |K|$.

% Further, pick $a \in A$ and $t \in T$. There exists a unique $k = k_t\in K$ such that $a^t = k_ta$. For $s \in T$, we have $a^{ts} = (a^t)^s = k_t^s a^s = k_t a^s = k_t k_s a$. Thus $t \mapsto k_t$ is a homomorphism $T \longrightarrow K$ which is injective since the action is free.

% Finally, as $a^2 \in K$ we see that $(a^2)^t = a^2$ for any $t \in T$, which is readily rewritten as $k_t^a = k_t^{-1}$. As a result, for $s \in T$ the identity $k_{st}^a = (k_sk_t)^a = k_s^a k_t^a$ implies $k_s^{-1} k_t^{-1} = k_t^{-1} k_s^{-1}$, and we see that $T$ is abelian.
% \end{proof}

We embark on the proof, which will occupy the rest of this section.  From Proposition~\ref{p: se_weak}, we see that we may conduct the proof assuming that  $G$ is isomorphic to one of $\PSL_2(q)$, ${^2\!B_2}(q)$ or $\PSU_3(q)$ where, in all three cases, $q=2^a$ and $a\geq 2$. Moreover, in all three cases, $H$ contains the centre of a Sylow $2$-subgroup of $G$.
 
\bigskip

\noindent {\em Some binary actions.} We start by proving that if $H$ is as small as possible, i.e. $H$ is equal to the centre of a Sylow $2$-subgroup of $G$, then the action of $G$ on $\Omega$ is binary (note that if $G=\PSL_2(q)$, then a Sylow $2$-subgroup of $G$ is equal to its own centre). Note first that, in all cases, $G$ acts 2-transitively by conjugation on the set of conjugates of $H$; moreover, distinct conjugates of $H$ intersect trivially and so we use Lemma~\ref{lem-RC-height-2} to confirm that the action of $G$ on $\Omega$ is binary. We must show that if $H_1, H_2$ and $H_3$ are distinct conjugates of $H$, then $H_1.H_2\cap H_3=\{1\}$.

In the case where $G=\PSL_2(q)$, then, since $q$ is even, we have $G=\SL_2(q)$ and we use 2-transitivity to take $H_1$ (resp. $H_2$) as the set of strictly upper-triangular (resp. lower-triangular) matrices. Now observe that, for $a,b\in\Fq$,
\[
 \begin{pmatrix}
  1 & a \\ & 1
 \end{pmatrix}\begin{pmatrix}1 & \\ b & 1 \end{pmatrix} = 
 \begin{pmatrix}
  1+ab & a \\ b& 1
 \end{pmatrix}.
\]
An element $g\in\SL_2(q)$ has order dividing $2$ if and only if ${\rm trace}(g)=0$; we conclude that the element on the right hand side of the above equation has order dividing $2$ if and only if $a$ or $b$ is equal to $0$. Using the fact that distinct conjugates of $H$ intersect trivially, we conclude, as required, that $H_1.H_2\cap H_3=\{1\}$ whenever $H_1, H_2$ and $H_3$ are distinct conjugates of $H$.

When $G=\PSU_3(q)$ we use the fact that if $H_1$ and $H_2$ are distinct conjugates of $H$, then $\langle H_1, H_2\rangle\cong \SL_2(q)$ and we use the calculation for $G=\SL_2(q)$ to conclude that $H_1.H_2\cap H_3=\{1\}$ once again, as required.

Finally, when $G={^2\!B_2}(q)$, we use \cite[\S13]{suzuki} to write down two conjugates of $H$, using the natural embedding of $H$ in $\Sp_4(q)$. Recall that $q=2^{2a+1}$ for some positive integer $a$ and we set $\theta$ to be the automorphism of $\Fq$ given by $\alpha \mapsto \alpha^{2^{a+1}}$. Now the conjugates of $H$ can be written as:
\begin{align*}
 H_1 &= \left\{h_1=\begin{pmatrix}
         1 & & & \\ & 1 & & \\ \beta_1 & & 1 & \\ \beta_1^\theta & \beta_1 & & 1
        \end{pmatrix} \mid \beta_1 \in \mathbb{F}_{2^{2a+1}}\right\}; \\
 H_2 &= \left\{h_2=\begin{pmatrix}
         1 & & \beta_2 & \beta_2^\theta \\ & 1 & & \beta_2 \\  & & 1 & \\  & & & 1
        \end{pmatrix} \mid \beta_2 \in \mathbb{F}_{2^{2a+1}}\right\}.
\end{align*}

If we multiply the two matrices, $h_1$ and $h_2$, given in $H_1$ and $H_2$, then we obtain
\[
h_1.h_2= \begin{pmatrix}
  1 & 0 & \beta_2 & \beta_2^\theta \\ 0 & 1 & 0 & \beta_2 \\ \beta_1 & 0 & 1+\beta_1\beta_2 & \beta_1\beta_2^\theta \\ \beta_1^\theta & \beta_1 & \beta_1^\theta\beta_2 & 1+\beta_1\beta_2+\beta_1^\theta\beta_2^\theta
 \end{pmatrix}.
\]
One can check directly that $(h_1h_2)^2=1$ if and only if $\beta_1=0$ or $\beta_2=0$. This implies that $h_1.h_2$ lies in a conjugate of $H$ if and only if $h_1=1$ or $h_2=1$. This confirms that $H_1.H_2\cap H_3=\{1\}$ for any distinct conjugate, $H_3$, of $H$, as required.

\bigskip

\noindent {\em No other examples.}  For the remainder we assume that $H$ is a proper subgroup of $G$ that properly contains the centre of $P$, a Sylow $2$-subgroup of $G$. Our job is to show that, in this case, the action of $G$ on $\Omega$ is not binary.

First, note that in all cases, $N_G(Z(P))=P\rtimes T$, where $T$ is a group of size $q-1$ (for $G=\SL_2(q)$ or $G={^2\!B_2}(q)$) or $(q^2-1)/\gcd(q+1,3)$ (for $G=\PSU_3(q)$). 

Assume, first, that $G=\SL_2(q)$ or $G={^2\!B_2}(q)$. Then $T$ acts on $P$ fixed-point-freely and so $N_G(Z(P))$ is a Frobenius group with Frobenius kernel $P$ and a Frobenius complement $T$. What is more, since distinct conjugates of $Z(P)$ generate $G$ in this case, we conclude that $H$ normalizes $Z(P)$. If $H\neq O_2(H)$, the largest normal 2-subgroup of $H$, then this means that $H$, too, is a Frobenius group with kernel $O_2(H)$ and a Frobenius complement $T_H$ isomorphic to a subgroup of $T$; in particular $|T_H|\geq 3$.

Now, distinct conjugates of $N_G(Z(P))$ intersect in a conjugate of $T$ and it follows that there exist distinct conjugates of $H$ that intersect in a conjugate of $T_H$. This implies that there is an orbit of $H$ on which $H$ acts as a Frobenius group with stabilizer $T_H$. Now Lemma~\ref{l: frobenius} implies that the action of $H$ on this orbit is not binary and Lemma~\ref{l: point stabilizer} implies that the action of $G$ on $\Omega$ is not binary, as required.

We conclude that if $G=\SL_2(q)$ or $G={^2\!B_2}(q)$, then $H=O_2(H)$. If $G=\SL_2(q)$ this implies that $H=Z(P)$, a contradiction and this case is done. Thus assume that $G={^2\!B_2}(q)$ and $H=O_2(H)$; in particular $Z(P)<H\leq P$, for $P$ some Sylow $2$-subgroup of $G$. 

We need an explicit matrix representation of a Sylow $2$-subgroup of $G$. We define $\theta:\mathbb{F}_{2^{2a+1}} \to \mathbb{F}_{2^{2a+1}}, \zeta \mapsto \zeta^{2^{a+1}}$ and set
\begin{align*}
 U_2&:=\left\{\begin{pmatrix}
              1 & & & \\ \alpha & 1 & & \\ \alpha^{1+\theta}+\beta & \alpha^\theta & 1 & \\ \alpha^{2+\theta}+\alpha\beta+\beta^\theta & \beta & \alpha & 1
             \end{pmatrix} \mid \alpha,\beta\in\mathbb{F}_{2^{2a+1}}
        \right\}; \\
\tau&:= \begin{pmatrix}
         &&&1 \\ &&1& \\&1&& \\1&&&
        \end{pmatrix}.        
\end{align*}
Referring to \cite[\S13]{suzuki}, we find that we can define $G$ to equal $\langle U_2, U_2^\tau\rangle$ and both $U_2$ and $U_2^\tau$ are then Sylow $2$-subgroups of $G$. If we identify $P$ with $U_2$, then $Z$, the centre of $U_2$, becomes the set of elements for which $\alpha=0$.

Now observe that
\[
 \begin{pmatrix}
  1 & & & \\1 & 1 & & \\ 1 & 1 & 1 & \\ 1 & 0 & 1 & 1
 \end{pmatrix}\begin{pmatrix}
       1 & 1 & 0 & 1 \\ & 1 & 1 & 1 \\ & & 1 & 1 \\ & & & 1
      \end{pmatrix}=\begin{pmatrix}
              1 & 1 & 0 & 1 \\ 1 & 0 & 1 & 0 \\ 1 & 0 & 0 & 1 \\ 1 & 1 & 1 & 1
             \end{pmatrix}.
\]
We identify $h_2$ (resp. $h_3$, $h_1$) with the first (resp. second, third) of these matrices. Note that $h_2$ and $h_3$ are of order $4$ while $h_1$ is of order $2$. 

The element $h_2$ is in $U_2$ (set $\alpha=1$ and $\beta=0$ in the description above). Since distinct Sylow $2$-subgroups of $G$ intersect trivially, $U_2$ is the unique Sylow $2$-subgroup containing $h_2$. Likewise we write $U_1$ (resp. $U_3$) for the unique Sylow $2$-subgroup containing $h_1$ (resp. $h_3$). Note that $U_2$ is a set of lower-triangular matrices and $U_3=U_2^\tau$ is a set of upper-triangular matrices and, since $h_3$ is neither upper- nor lower-triangular, we can see that $U_1, U_2, U_3$ are distinct.

We know that $G$ contains a unique class of involutions and we also use the easy fact that $G$ contains a unique conjugacy class of cyclic subgroups of order $4$. Thus we can assume that $H_1$ (resp. $H_2$, $H_3$) contains $h_1$ (resp. $h_2$, $h_3$). But now, since $U_1, U_2$ and $U_3$ are distinct, the same is true of $H_1$, $H_2$ and $H_3$. But $h_1\in H_1\cap H_2\cdot H_3$ and so Lemma~\ref{lem-RC-height-2} yields the result.

\bigskip

\noindent {\em The more difficult case of $\PSU_3(q)$.}  We are left with the possibility that $G=\PSU_3(q)$. We set $q\geq 4$, since $G$ is simple. We let $H$ and $\Omega$ be as in the statement of Theorem~\ref{t: se_strong} with the added assumption that $H$ properly contains $Z(P)$, the centre of $P$, a Sylow $2$-subgroup of $G$; we assume that the action of $G$ on $\Omega$ is binary and will argue so as to obtain a contradiction.

We start with general information about $\PSU_3(q)$. Let $\{e_1, w, f_1\}$ be a hyperbolic basis for $V=(\mathbb{F}_{q^2})^3$ with respect to a non-degenerate Hermitian form and define three subgroups of the associated special isometry group, $\SU_3(q)$:
\begin{align*}
 P^\dagger &= \left\{\begin{pmatrix}
                  1 & a_1 & a_2 \\ & 1 & a_1^q \\ & & 1
                 \end{pmatrix} \mid a_1, a_2\in \mathbb{F}_{q^2} \textrm{ and }a_2+a_2^q+a_1^{q+1}=0 \right\} \\
T^\dagger &= \left\{\begin{pmatrix}
                  t & & \\ & t^{q-1} &  \\ & & t^{-q}
                 \end{pmatrix} \mid t \in \mathbb{F}_{q^2} \textrm{ and }t \neq 0 \right\} \\
L^\dagger  &=\left\{\begin{pmatrix}
                  a & & b \\ & 1 &  \\  c & & d
                 \end{pmatrix} \mid a,b,c,d\in \Fq \textrm{ and } ad-bc=1 \right\}
\end{align*}
Now we can take $P$ to be the projective image of $P^\dagger$ with $Z(P)$ being the projective image of the subgroup for which $a_1=0$ and $a_2\in \Fq$. Then $N_G(P)$ is the projective image of $P^\dagger \rtimes T^\dagger$. 

Another fact about $G$ that will be useful is that for any nonzero $c \in \f_{q^2}^\times$, the action of $G$ on pairs of isotropic vectors $(v,w)$ such that $\langle v, w \rangle = c$ is transitive. This follows easily from Witt's theorem.

We must consider two possibilities:

\smallskip
\noindent {\em Case (1) : $H$ contains two distinct conjugates of $Z(P)$.} Since, as we saw above, $G$ acts 2-transitively on the set of conjugates of $Z(P)$, there is a unique conjugacy class of subgroups of $G$ generated by 2 distinct conjugates of $Z(P)$; this class includes $L$, the projective image of $L^\dagger$, a subgroup of $G$ isomorphic to $\SL_2(q)$.

Thus, in this first case, we assume that $H$ contains $L$. Consulting \cite[Tables~8.5 and 8.6]{bhr} we see that $H$ normalizes $L$ and $|H:L|$ divides $(q+1)/\gcd(3,q+1)$.
Now define $Q^\dagger$ to be the subgroup of $P^\dagger$ obtained by restricting $a_1$ to $\Fq$ in the definition above. Then $Q^\dagger$ is a group of order $q^2$ and we take $Q$ to be its projective image in $G$. Define $\Lambda$ to be the subset of $\Omega$ whose elements are subsets of $H.Q$. Then $\Lambda$ is a set of size $q$, on which $Q$ acts transitively.

Let $R^\dagger = T^\dagger \cap L^\dagger$, so elements of $R^\dagger$ are diagonal matrices in $\SU_3(q)$ whose entries are in $\Fq$; let $R$ be the projective image of $R^\dagger$ in $G$ and notice that $R$ is a cyclic subgroup of $H$ of order $q-1$. Now $R$ normalizes $Q$ and hence, for all $q_1\in Q$ and $r\in R$, there exists $q_2\in Q$ such that
\[
 (Hq_1)r = (Hr)q_2 = Hq_2.
\]

Thus $R$ acts on $\Lambda$. Indeed, since $R< H$, $R$ acts on $\Lambda\setminus\{H\}$, a set of size $q-1$. We claim that, for $q\in Q$ and $r\in R$, $(Hq)r=Hq$ if and only if $Hq=H$ or $r=1$. To see this, observe first that an element of $H$ is the projective image of a matrix of form
\[
 h^\dagger = \begin{pmatrix}
  as & & bs \\ & 1/s^2 & \\ cs & & ds
 \end{pmatrix}
\]
where $a,b,c,d\in\Fq$, $s\in \mathbb{F}_{q^2}$, $ad-bc=1$ and $s^{q+1}=1$. Fixing this $h^\dagger$, we find that for $q^\dagger\in Q^\dagger$, we have
\[
 h^\dagger q^\dagger = \begin{pmatrix}
                        as & aa_1 s & aa_2s+bs \\ 0 & 1/s^2 & a_1/s^2 \\ cs & ca_1s & ca_2s+ds
                       \end{pmatrix}
                       \]
for some $a_1\in\Fq$, $a_2\in \mathbb{F}_{q^2}$ with $a_1^2+a_2+a_2^q=0$. Notice that if we vary the element $h^\dagger$ but fix $q^\dagger$, then the quotient of the non-zero elements of the middle row of $h^\dagger q^\dagger$ is fixed and equal to $a_1$. In particular all elements which project onto $Hq$ have this quotient equal to $a_1$.

Finally, fixing this $q^\dagger$ and $h^\dagger$, we find that for $r^\dagger \in R^\dagger$
\[
 h^\dagger q^\dagger r^\dagger= \begin{pmatrix}
                        ras & aa_1 s & (aa_2s+bs)/r \\ 0 & 1/s^2 & a_1/(rs^2) \\ crs & ca_1s & (ca_2s+ds)/r
                       \end{pmatrix}
                       \]
for some $r\in \Fq^*$. Notice that for this element, the quotient of the non-zero elements of the middle row is equal to $a_1/r$. We conclude that $(Hq)r=Hq$ if and only if $a_1/r=a_1$, which is if and only if $a_1=0$ or $r=1$. But if $a_1=0$, then $Hq=H$ and the claim follows.

Since $|R|=|\Lambda\setminus\{H\}|=q-1$, the claim implies that $R$ acts transitively on $\Lambda\setminus\{H\}$ and we conclude that the group $Q\rtimes R$ acts 2-transitively on $\Lambda$.

%HERE

% Thus $R$ acts on $\Lambda$. Indeed, since $R\in H$, $R$ acts on $\Lambda\setminus\{H\}
% $, a set of size $q-1$. What is more, direct matrix calculation confirms that, for $q\in Q$ and $r\in R$, $(Hq)r=Hq$ if and only if $Hq=H$ or $r=1$. Since $|R|=|\Lambda\setminus\{H\}|=q-1$, this implies that $R$ acts transitively on $\Lambda\setminus\{H\}$ and we conclude that the group $Q\rtimes R$ acts 2-transitively on $\Lambda$. 

Then Lemma~\ref{l: beautiful} implies that, since the action of $G$ on $\Omega$ is binary, $G$ must contain a section isomorphic to $A_{q-1}$, the alternating group on $q-1$ letters; but now \cite[Proposition~5.3.7]{kl} implies that this is impossible for $q\geq 8$. Thus $G=\PSU_3(4)$ and \cite{gap} confirms that here too we have a contradiction, as required.

\smallskip 

\noindent {\em Case (2): $H$ contains a unique conjugate of $Z(P)$.} In particular $Z(P)<H\leq B:= N_G(Z(P))$. The group $B$ is the Borel subgroup of $G$ and, using its well-known structure, we see that $H=Q\rtimes T_1$ where $Z(P)\leq Q\leq P$ and $T_1$ is a (possibly trivial) group of order dividing $(q^2-1)/\gcd(q+1,3)$. We shall first prove that we can reduce to the case $Q=P$, and then that we can assume that the order of $T_1$ divides $q+1$. At this point, the action of $G$ on $(G:H)$ can be understood more concretely, and we will conclude by direct arguments.

\medskip

(a) To begin with, we shall consider the action of $B$ on $(B:H)$. By  Lemma~\ref{lem-intermediate-binary}, it is also binary. We shall use this to prove, using the apparatus of component groups again, that $H$ must contain $P$.

Note that $H$ contains $Z(P)$ which is normal in $B$ and hence lies in the kernel of the action of $B$ on $(B:H)$. Define $\overline{B}:= B/Z(P)$ and $\overline{H}:=H/Z(P)$ and observe that $\overline{B}= \overline{P}\rtimes \overline{T}$ where $\overline{P}$ is elementary-abelian of order $q^2$ and $\overline{T}$ is a cyclic group of order $(q^2-1)/\gcd(q+1,3)$; similarly we can take $\overline{H}= \overline{Q}\rtimes \overline{T_1}$ where $1 \leq \overline{Q}\leq \overline{P}$ and $1 \leq \overline{T_1}\leq \overline{T}$. Also, the conjugation action of $\overline{T}$ on $\overline{P}$ can be described as follows: if we use the entry $a_1$ in the description of $P^\dagger$ to identify $\overline{P}$ with $\f_{q^2}$, then the action of the element which is the image of $\operatorname{diag}(t, t^{q-1}, t^{-q})$ is by multiplication by $t^{q-2}$. By assumption the action of $\overline{B}$ on $(\overline{B}:\overline{H})$ is binary.

Suppose, first, that $\overline{Q}=\{1\}$. In this case $\overline{H}=\overline{T_1}$ is non-trivial by assumption and $\overline{T_1}< \overline{B_1}:=\overline{P}\rtimes \overline{T_1} \leq \overline{B}$. But now the action of $\overline{B_1}$ on $(\overline{B_1}:\overline{T_1})$ is a Frobenius action and hence, by Lemma~\ref{l: frobenius}, is not binary (note that $|\overline{T_1}|$ is odd). Now Lemma~\ref{lem-intermediate-binary} implies that the action of $\overline{B}$ on $(\overline{B}:\overline{H})$ is not binary and so the same is true of the action of $B$ on $(B:H)$, a contradiction.

Suppose, instead, that $\overline{Q}\neq\{1\}$. In particular $\overline{Q}$ contains an involution $g$. We claim that the component group of $g$ is $\overline{P}$, so that $\overline{Q} = \overline{P}$ by Theorem \ref{coro-connected-comp}. The claim is very easy when $q$ is of the form $q= 2^{2a}$, for in this case, the non-identity elements of $\overline{P}= \f_{q^2}$ form the unique conjugacy class $\CC$ of involutions in $\overline{B}$, and it is a direct consequence of the definitions that $\Gamma(\CC)$ is connected (it is in fact a complete graph). The component group of $g$ is generated by $\CC$ and so it is clearly $\overline{P}$.

When $q= 2^{2a+1}$, there are three conjugacy classes of involutions in $\overline{B}$, and the class $\CC$ of $1 \in \f_{q^2} = \overline{P}$ comprises all the cubes in $\f_{q^2}^\times$ (it is also the image of the map $t \mapsto t^{q-2}$ on $\f_{q^2}^\times$ considered above). Lemma \ref{lem-cubes} below shows that the component group of $1$ is then $\overline{P}$. If we see $\f_{q^2}^\times$ as the disjoint union of the three conjugacy classes $\CC, \CC'$ and $\CC''$, then multiplication by any $x\in \f_{q^2}^\times$ provides an automorphism of the disjoint union of the graphs $\Gamma(\CC)$, $\Gamma(\CC')$ and $\Gamma(\CC'')$, and it follows easily that the conclusions of Lemma \ref{lem-cubes} can be extended to the other conjugacy classes.

We have established that $\overline{H}=\overline{P}\rtimes \overline{T_1}$. Here $\overline{H}$ is normal in $\overline{B}$ and so the action of $\overline{B}$ on $(\overline{B}:\overline{H})$ is binary, and a contradiction will not be found. Thus we return to considering the action of $G$ on $(G:H)$ directly. As announced, however, now we know that $H$ contains $P$, and in fact that $H=P\rtimes T_1$. 

\medskip

(b) Next we claim that the order of $T_1$ must, in fact, divide $(q+1)/\gcd(q+1,3)$. To see this, we note that it is easy to find $H_1$, a conjugate of $H$, such that $H\cap H_1=T_1$ and now Lemma~\ref{l: point stabilizer} implies that the action of $H$ on $(H:T_1)$ is binary. If $|T_1|>1$ and $|T_1|$ divides $q-1$, then the action of $H$ on $(H:T_1)$ is Frobenius and Lemma~\ref{l: frobenius} gives a contradiction. If $|T_1|>1$ and $|T_1|$ does not divide $q-1$, then $T_1$ has a subgroup $T_0$ of order $\gcd (|T_1|, q+1/\gcd(q+1,3))$ satisfying the conditions of Lemma \ref{lem-pseudo-frobenius} below with $N=P$ and $K=Z(P)$. Thus $T_1=T_0$ and the claim follows.

\medskip

(c) Let $T_1^\dagger$ denote the preimage of $T_1$ in $\SU_3(q)$. The group $T_1^\dagger$ can be identified with a subgroup of $\f_{q^2}^\times$ in various ways, for example {\em via} $\operatorname{diag}(t, t^{q-1}, t^{-q})\mapsto t$ ; in any case, there is a unique subgroup $T_1' \subset \f_{q^2}^\times$ having the same order as $T_1^\dagger$.

Assume, first, that $T_1^\dagger$ is non-trivial and pick $x \in T_1'$ with $x \ne 1$. We write $X$ for the set of equivalence classes of isotropic vectors in $V$ under the relation $v \sim t v $ for $t \in T_1'$. Then $G$ acts on $X$, transitively, and with stabilizers conjugate to $P \rtimes T_1$, so we work with this action and show that it admits tuples that are $2$-related but not $3$-related (condition (1) from Lemma \ref{lem-criterion-2-not-implies-3}). 

For this, pick $y \in \f_{q^2}$ such that $x + x^q = y^{q+1}$, so that $v= (x,y, 1)$ and $v'=(1, y, x)$ are isotropic. The tuples will be $([e_1], [f_1], [v])$ and $([e_1], [f_1], [v'])$, where the notation $[-]$ refers to the equivalence class. One sees easily that the tuples are $2$-related, using the transitivity of the action of $G$ on pairs of isotropic vectors with a fixed hermitian product, and the fact that $x$ is taken from $T_1'$. Suppose now that $g \in G$ were to take one triple to the other; as $g$ fixes the lines spanned by $e_1$ and $f_1$, respectively, it is of the form $\operatorname{diag}(t, t^{q-1}, t^{-q})$, and the condition is thus that there should exist $s \in T_1'$ such that \[ (tx, t^{q-1} y, t^{-q}) = (s, sy, sx) \, . \] In particular $t^{-q} = sx$, and so raising to the power $q+1$ (remembering that $x$ and $s$ are in $T_1'$ and so have order dividing $q+1$), we get $t^{-q} = t$ so $t=sx$. Comparing with $tx = s$ we get $x^2 = 1$ so $x=1$, a contradiction.

We are left with the case when $T_1^\dagger$ is trivial. This means, first, that $q= 2^{2a}$ and, second, that $H=P$. In this situation $\PSU(3,q) = \SU(3,q)$, so we can consider the action of $G$ on the set of isotropic vectors; this set may be identified with $\Omega$, with its action by $G$, as the stabilizer of an isotropic vector is $H$. We shall write down two triples of isotropic vectors which are $2$-related but not $3$-related, showing that the action cannot be binary. For a vector $v=(x, y, z)$, writing $\langle -,- \rangle$ for the Hermitian product, we have $\langle v,v \rangle = xz^q + x^qz + yy^q$, and $\langle e_1, v \rangle = z$ while $\langle f_1, v \rangle = x$. Now we pick any $x$ with $x + x^q = 1$ and $y \ne 1$ such that $yy^q = 1$, and we define $v=(x, y, 1)$, $v' = (x, 1, 1)$. The vectors $v, v'$ are isotropic, and the triples $(e_1, f_1, v)$ and $(e_1, f_1, v')$ are $2$-related, as follows again from the fact that $G$ acts transitively on pairs of isotropic vectors with constant hermitian product. However, these triples are not $3$-related, as an element of $G$ fixing $e_1$ and $f_1$ must be the identity.

\bigskip

This concludes the study of the group $\PSU(3,q)$ and the proof of Theorem \ref{t: se_strong}. 

\subsection{Auxiliary results}

During the above argument, we have relied on two lemmas which we present now. First we have:

\begin{lem} \label{lem-cubes}
Let $q = 2^{2a+1}$, and let $C$ be the set of cubes in $\f_{q^2}^\times$.
\begin{enumerate}
\item Let $\Gamma$ be the graph on $C$ in which two elements $x, y \in C$ are joined by an edge if and only if $x+y \in C$. Then $\Gamma$ is connected.
\item The additive group $\f_{q^2}$ is generated by $C$.
\end{enumerate}
\end{lem}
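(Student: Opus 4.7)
The strategy is to exploit the observation that $\f_q^\times$ is already contained in $C$. Indeed, since $q = 2^{2a+1}$ we have $\gcd(3, q-1) = 1$, so cubing is a bijection on $\f_q^\times$ and every element of $\f_q^\times$ is automatically a cube in $\f_{q^2}^\times$. Both parts of the lemma will reduce to elementary counting once this is in hand.

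For part (2), the additive subgroup $V$ generated by $C$ is closed under multiplication by any element of $C$ (since $c_1 c_2 \in C$ for $c_1, c_2 \in C$), and in particular under multiplication by $\f_q^\times$, so $V$ is an $\f_q$-subspace of $\f_{q^2}$. As $\f_q^\times \subseteq C$, we already have $\f_q \subseteq V$. To finish it suffices to exhibit some $y \in C \setminus \f_q$, because then $\f_q \cdot y \subseteq V$ and $\f_q + \f_q y = \f_{q^2}$. But a direct cardinality count gives $|C \setminus \f_q| = (q^2-1)/3 - (q-1) = (q-1)(q-2)/3 > 0$ for $q \geq 4$, so such a $y$ exists.

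For part (1), I first claim that $\f_q^\times$ lies in the connected component $C_1$ of $1 \in \Gamma$: for any $t \in \f_q^\times \setminus \{1\}$, the sum $1+t$ is a non-zero element of $\f_q$, hence in $\f_q^\times \subseteq C$, giving an edge $1 \sim t$. It then suffices to show that each $y \in C \setminus \f_q$ is joined by an edge to some $z \in \f_q^\times$. For this, observe that $\f_q^\times$ acts by multiplication on the set of additive cosets of $\f_q$ in $\f_{q^2}$, fixing the coset $\f_q$ and, as a brief verification shows, acting regularly on the remaining $q-1$ cosets. Since $C$ is stable under multiplication by $\f_q^\times$, each of these $q-1$ cosets meets $C$ in the same number of points, which must therefore be $|C \setminus \f_q|/(q-1) = (q-2)/3$ (an integer, since $3 \mid 4^a - 1$). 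Consequently the coset $y + \f_q$ contains $y$ together with $(q-5)/3$ further elements of $C$, each of the form $y + z$ with $z \in \f_q^\times$; any such $z$ yields an edge $y \sim z$, placing $y$ in $C_1$.

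The tightest point in the argument is the small case $q = 8$, where $(q-5)/3 = 1$: every $y \in C \setminus \f_q$ then has exactly one neighbor in $\f_q^\times$, just barely enough to conclude. For all other values $q = 2^{2a+1}$ with $a \geq 1$ the bound is comfortably positive, so no case analysis is needed beyond confirming that $a \geq 1$ (i.e.\ $q \geq 8$) is in force, which is the range in which the lemma is invoked.
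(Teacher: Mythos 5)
Your proposal is correct, and while your part (2) is essentially the paper's argument, your part (1) takes a genuinely different route. For part (2) both proofs observe that, since $\gcd(3,q-1)=1$ forces $\f_q^\times\subseteq C$, the additive subgroup generated by $C$ is an $\f_q$-subspace of $\f_{q^2}$, and both finish by the same count ($|C|>q$ in the paper; the existence of an element of $C\setminus\f_q$ in your version). For part (1) the paper argues multiplicatively: multiplication by any $c\in C$ is an automorphism of $\Gamma$, so the component $\Delta$ of $1$ contains the subgroup $C_0$ generated by the neighbours of $1$; it then shows these neighbours include $\f_q^\times$ and the $(q+1)/3$-rd roots of unity (the latter via the norm-type computation $(1+x)^{(q^2-1)/3}=1$), whence $C_0=C$ by coprimality of $q-1$ and $(q+1)/3$. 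You instead use equidistribution: $C$ is invariant under multiplication by $\f_q^\times$, which permutes the $q-1$ nontrivial additive cosets of $\f_q$ regularly, so each such coset meets $C$ in exactly $(q-2)/3$ points, and therefore every $y\in C\setminus\f_q$ has exactly $(q-5)/3\geq 1$ neighbours inside $\f_q^\times$, which you have already attached to the component of $1$. Your route avoids the cube computation and the group-generation step, and it proves slightly more: every vertex lies at distance at most $1$ from the clique $\f_q^\times$, so $\Gamma$ has diameter at most $3$, a fact not visible in the paper's argument. The price is the explicit hypothesis $q\geq 8$, i.e.\ $a\geq 1$, but this is harmless: for $a=0$ the lemma is false anyway (part (2) fails, as $C=\{1\}$ in $\f_4^\times$), the paper's own proof of part (2) equally needs $|C|>q$, and the lemma is only ever invoked in the paper for $q=2^{2a+1}\geq 8$.
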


\begin{proof}
(1) Note that $C$ is a subgroup of $\f_{q^2}^\times$. Any $c \in C$ gives rise to a graph automorphism $\varphi_c$ of $\Gamma$ given by multiplication by $c$. When $c$ is a neighbour of $1$, we see that $\varphi_c$ must stabilize the connected component $\Delta$ containing $1$; as a result, if $c = c_1 c_2 \cdots c_s$ where each $c_i$ is a neighbour of $1$, we also have $\varphi_c(\Delta) = \Delta$, and in particular $c = \varphi_c(1) \in \Delta$. Thus it suffices to prove that $C$ is generated as a group by the neighbours of $1$, in order to conclude that $\Delta  = C$.

Any element $x \in \f_q^\times$ is a cube, and $x+1 \in \f_q^\times$ is another cube (assuming $x \ne 1$), so this $x$ is a neighbour of $1$. We see that the subgroup $C_0$ generated by these neighbours contains $\f_q^\times$, so that its order is divisible by $q-1$. We now show that any element $x \in \f_{q^2}$ satisfying $x^m = 1$, where $m = (q+1)/3$, is also a neighbour of $1$ (it is a cube since $x^{(q^2 - 1)/3} = x^{(q-1)m}= 1$).  Clearly this will suffice to show that $C_0=C$.

Let $x$ be such an element. Cubing, we see that $x^{q+1} = 1$ so that $x^q = x^{-1}$. To check that $1+x$ is a cube, we show that $(1+x)^{(q^2-1)/3} = 1 $ and for this we compute: 
\[ (1+x)^{(q-1)m} = \frac{(1+x)^{qm}}{(1+x)^m} = \left( \frac{1 + x^q}{1+x} \right)^m = \left( \frac{1 + x^{-1}}{1 + x}\right)^m = \frac{1}{x^m} = 1 \, .  \]
(2) Since the elements of $\f_q$ are cubes, the subgroup generated by $C$ is really an $\f_q$-linear subspace, so that its order is $1$, $q$ or $q^2$ by linear algebra. The cardinality of $C$ is greater than $q$, so the result follows.
\end{proof}

The second, quite technical lemma is a variation on Lemma \ref{l: frobenius} about Frobenius actions. Indeed, the proof is an elaboration of an argument due to Winscons which served to prove Lemma \ref{l: frobenius} (see \cite{gls_binary}). Note that, when the hypothesis $|T_0| > 1$ is not satisfied, then the action is Frobenius.

\begin{lem} \label{lem-pseudo-frobenius}
        Let $G =N \rtimes T$ be a semi-direct product. Let $K$ be a normal subgroup of $G$ with $K \subset N$, and let $A = N \smallsetminus K$. We assume that the action of $T$ on $A$ by conjugation is free, and that the action of $T$ on $K \smallsetminus \{ 1 \}$ factors as a free action of a quotient $T/T_0$. Also, to avoid degenerate cases, we assume $|N:K| > 1$ and $|T_0| > 1$.
        
        Finally, we assume that the action of $G$ on the set of cosets of $T$ is binary. Then $T_0 = T$. Moreover, if $|N:K| > 2$, then $|T| \le 1 + 2 |K|$.
        
        \end{lem}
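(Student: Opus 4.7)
The plan is to apply Lemma~\ref{lem-criterion-2-not-implies-3} to a carefully chosen triple of cosets of $T$. Identify $\Omega=(G:T)$ with $N$ via $n\mapsto Tn$, so the stabiliser of $n$ is $n^{-1}Tn$, and $G$ acts on $N$ by $n\cdot(mt)=(nm)^{t}$. I would pick elements $a,b\in A$ and consider the three-tuple $(1,a,b)$ with stabilisers $H_1=T$, $H_2=a^{-1}Ta$, $H_3=b^{-1}Tb$. Because $T$ acts freely on $A$, one has $H_1\cap H_2=H_1\cap H_3=\{1\}$ automatically, and whenever $ab^{-1}\in A$ (which we can arrange as soon as there are two distinct $K$-cosets in $A$) also $H_2\cap H_3=\{1\}$, so condition~2(b) of Lemma~\ref{lem-criterion-2-not-implies-3} is simply the requirement $h_1\neq 1$.

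For the first conclusion $T_0=T$, I would expand the identity $h_1h_2h_3=1$ with $h_i=a_i^{-1}t_ia_i$ inside $N\rtimes T$. A direct computation reduces the $N$-part to the relation $(t_1-1)\,a+t_1(t_2-1)\,b=0$ (interpreted in $N$, or in the abelianisation $N/K$ followed by a lift), together with the constraint $t_1t_2t_3=1$ so that $h_1=(t_1t_2)^{-1}$. The key point is that, because $|T_0|>1$, there exists a non-identity candidate for $h_1$ in $T_0$, and because $T$ acts freely on $A$ we have enough freedom in $\bar a,\bar b\in N/K$ to realise any ratio dictated by the equation. This produces the forbidden witness to non-binarity and contradicts the binary hypothesis, forcing $T_0=T$. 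This is a precise elaboration of Winscons' Frobenius argument: in the Frobenius case $T_0=1$ the target $h_1$ is forced to be $1$ and the argument instead yields the bound $|T|\le 2$.

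For the second conclusion, with $T_0=T$ now in hand, $T$ centralises $K$ so $(t-1)k=0$ for every $t\in T$ and $k\in K$; this trivialises the $K$-part of the above equation. When $|N:K|>2$ I can pick $a,b\in A$ in distinct $K$-cosets and keep the TI configuration of Part~1. The equation now has obstruction $C=C(\bar a,\bar b,t_1,t_2)\in K$, well-defined modulo the lift, and binary demands $C\neq 0$ for all non-trivial choices. Counting the $|T|^{2}$ parameter pairs $(t_1,t_2)$ against the $|K|$ possible values of $C$ (with a factor of $2$ coming from swapping $a$ and $b$) yields $|T|\le 1+2|K|$. The hard part I anticipate is Part~1: the shorthand $(t_1-1)a$ hides a multiplicative computation involving commutators in $N$ in general, so one either needs the mild extra hypothesis that $K$ is central in $N$ (which holds in the only intended application, $\PSU_3(q)$ with $K=Z(P)$), or one must pass to $N/K$, solve there, and then lift using the freeness hypothesis to control the residual $K$-obstruction.
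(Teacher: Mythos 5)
The decisive gap is in your Part 1. The witness you try to build uses only hypotheses of the lemma --- namely $|T_0|>1$ and the freeness of $T$ on $A$ --- and never the negation $T_0 \ne T$ of the conclusion. So if your construction went through, it would show that the hypotheses of Lemma~\ref{lem-pseudo-frobenius} are outright inconsistent (no binary action exists at all), which is a much stronger assertion than the lemma makes and one you have not established: the statement is precisely that binarity is compatible with $T_0=T$, and the paper applies it in exactly that way (to force $T_1=T_0$ for a hypothetically binary action). The reason the construction cannot work as described is the step you yourself flag as ``the hard part'': freeness of $T$ on $A$ does \emph{not} descend to $N/K$, since $a^ta^{-1}\in K$ does not imply $t=1$; consequently the maps $\bar x \mapsto \bar x^{t}\bar x^{-1}$ on $N/K$ need not be injective or surjective, and there is no ``freedom to realise any ratio''. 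Even when the equation can be solved modulo $K$, the lift leaves an error term in $K$ which must vanish, and whether it can be killed is governed by whether $T/T_0$ moves $K\smallsetminus\{1\}$ freely --- that is, by the very dichotomy $T_0=T$ versus $T_0<T$ that you are trying to prove. The entire content of the lemma sits in the step you have deferred. (Note also that neither $N$ nor $N/K$ is assumed abelian, so the additive shorthand $(t_1-1)a$ is not available at the stated level of generality; the paper's proof uses no commutativity in $N$ whatsoever.)

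There are two further gaps, which a comparison with the paper makes clear. First, the conclusion $T_0=T$ must also be proved when $|N:K|=2$, but your configuration requires $a,b$ in distinct $K$-cosets of $A$; the paper disposes of this case by a one-line congruence (a prime $p$ dividing $|T/T_0|$ would divide $|A|=|K|$, while freeness of $T/T_0$ on $K\smallsetminus\{1\}$ forces $|K|\equiv 1 \pmod p$). Second, your Part 2 ``counting $|T|^{2}$ parameter pairs against $|K|$ values of $C$'' carries no injectivity claim, so it yields no inequality at all. The paper's mechanism is different and avoids every one of these issues: binarity is used exactly once, to show that if $a_1\in \bigl(a_0\cdot G_a \cap a_0\cdot G_b\bigr)\smallsetminus a_0\cdot (G_a\cap G_b)$ then $(a_0,a,b)$ and $(a_1,a,b)$ are $2$-related but not $3$-related; hence orbits of a fixed point $a_0$ under point stabilizers intersect in the orbit under the intersection of those stabilizers. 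The rest is a count of the resulting pairwise disjoint sets inside $A$, giving $s\bigl[|T_0|-1+|K|(|T|-|T_0|)\bigr]\le |N|-|K|$, from which both $T_0=T$ and $|T|\le 1+2|K|$ follow by elementary manipulation. To salvage your route you would need to prove solvability in $N/K$, show the $K$-obstruction can be killed exactly when $T_0<T$ (e.g.\ via bijectivity of $k\mapsto k^sk^{-1}$ on $K$ for $s\notin T_0$), and treat $|N:K|=2$ separately --- at which point you would have rebuilt the paper's counting argument in a harder form.
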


        \begin{proof}
        If $|N:K|=2$, and there is a prime $p$ dividing the order of $T/T_0$, we notice that $p$ divides $|A|$, while $|K| = 1$ mod $p$. This is absurd, as $|A| = |K|$ in this case, so we conclude that $T_0=T$. We continue the argument under the assumption $|N:K| > 2$.
        
        The set of cosets of $T$ in $G$ may be identified with $N$, in such a way that the action of the subgroup $N$ is by right multiplication, and the action of $T$ is by conjugation. The stabilizer of $1$ is $G_1 = T$ and the stabilizer of $x \in N$ is $G_x = T^x$.

        First we note that $T \cap T^a = \{ 1 \}$ for $a \in A$. Indeed, $T \cap T^a$ is the stabilizer of $a$ for the action of the subgroup $T$, and this is assumed to be free. By the same token, for $k \in K \smallsetminus \{ 1 \}$, we have $T\cap T^k = T_0$.

        Pick $a, b \in A$ with $a \ne b$. We claim that $T^b \cap T^a = \{ 1 \}$ when $a$ and $b$ are not in the same (right) coset of $K$ in $N$, and $T^b \cap T^a = T_0^a = T_0^b$ otherwise. To see this, note $(T^b \cap T^a)^{b^{-1}} = T \cap T^{ab^{-1}}$ and by the previous paragraph, this intersection is trivial unless $k:=ab^{-1} \in N \smallsetminus A = K$, so that $a= kb$; by the second point made in the previous paragraph, when $a=kb$, we have $T^b \cap T^a = T_0^b$. Also, elements of $T_0$ commute with elements of $K$ by assumption, so $T_0^k = T_0$ and it follows that $T_0^a = T_0^b$.
        
        Fix $a_0 \in A$. We are going to study the various sets $X_a = a_0 \cdot G_a \smallsetminus \{ a_0 \}$, where $a \in A$ and $a_0 \cdot G_a$ is the orbit of $a_0$ under $G_a$. Suppose that, for some $a,b$, we can find $a_1$ such that $a_1\in (a_0 \cdot G_a\cap a_0 \cdot G_b)\setminus a_0 \cdot G_{ab}$. Set $I=(a_0,a,b)$ and $J=(a_1,a,b)$ and observe that $I \stb{2} J$ but $I \nstb{3}J$. This contradicts the fact that the action of $G$ on $N$ is assumed to be binary. Hence we conclude that, for any $a, b \in A$ the intersection of $a_0 \cdot G_a$ and $a_0 \cdot G_b$ is $a_0 \cdot G_{ab}$. 
        
        So if $a$ and $b$ are not in the same coset of $K$, we have $X_a \cap X_b = \emptyset$. On the other hand, if $a= kb$ for some $k \in K \smallsetminus \{ 1 \}$, then $X_a \cap X_b = a_0 \cdot T_0^a \smallsetminus \{ a_0 \}$, where $T_0^a = T_0^b$. We may put $Z_a = a_0 \cdot T_0^a \smallsetminus \{ a_0 \}$, and $Z_a$ only depends on the coset of $K$ containing $a$.
        
        If we write $Y_a$ for the union of all the sets $X_{ka}$, for $k \in K$, then $Y_a$ is the disjoint union of $Z_a$ and the various $X_{ka} \smallsetminus Z_a$. When $a$ and $b$ are not in the same coset of $K$, the sets $Y_a$ and $Y_b$ are disjoint.
        
        Now when $a$ does not lie in $Ka_0$, we have $| a_0 \cdot G_a| = |G_a| = |T|$, since $G_{a_0 a} = \{ 1 \}$, so $|X_a| = |T| -1$. Also $|a_0 \cdot T_0^a | =|T_0^a| = |T_0|$, so we have $|Z_a| = |T_0| - 1$.
        
        Suppose now that $A$ is the disjoint union of the cosets $Ka_0, Ka_1, \ldots, Ka_s$. The union of the sets $Y_{a_i}$ for $1 \le i \le s$ is disjoint, and it forms a subset of $A$, so we have 
        \[ s \big[  |T_0| - 1 + |K|(|T| - |T_0|)  \big]  \le |N| - |K| \, . \tag{*} \]
        Put $e = [N:K]$, so that $s= e-2$, and put $d= [T:T_0]$. From (*) we have 
        \[ 1  - \frac{1}{|T_0| } + |K|(d-1) \le \frac{|N| - |K|}{|T_0|(e-2)} \, , \tag{**}\]
        and then using that $|T_0| \ge 2$ and some simple rearrangement, we obtain 
        \[ d \le 1 + \frac{1}{2} \left( 1 + \frac{1}{e-2} \right) \le 2 \, . \]
        When $e >3$ we conclude that $d < 2$, so that $d=1$ and $T=T_0$. It remains to examine the possibility $e=3$ and $d=2$. We plug this back in (**) and, writing $t= |T|$ and $n= |N|$, we end up with 
        \[ 1 - \frac{2}{t} + \frac{n}{3} \le \frac{4n}{3t} \, . \]
        We rework this to obtain 
        \[ t \le 4 - \frac{6}{n+3} < 4 \, . \]
        This is absurd, however, as $|T_0| \ge 2$ so that $t = |T| \ge 4$. We have proved that $d=1$ and $T_0 = T$.
        
        Going back to (*), we are left with 
        \[ |T| \le 1 + \frac{|N|}{e} \left(1 + \frac{1}{e-2}\right) \le 1 + \frac{2|N|}{e} = 1 + 2|K| \, . \]
        This concludes the proof.
        \end{proof}

\section{The binary actions of \texorpdfstring{$\PSL_2(q)$}{PSL(2,q)}} \label{sec-epilogue}
 
This section is devoted to the proof of Theorem~\ref{t: psl2 binary transitive}, which was stated in the introduction, and which is reproduced here:

\begin{thm2}
Let $G=\PSL_2(q)$ act on $\Omega$, the set of right cosets of a subgroup $H<G$ and suppose $q\neq 5$. The action is binary if and only if one of the following occurs:
\begin{enumerate}
      \item $H=\{1\}$ and the action is regular;
      \item $q=2^a$ for some integer $a$ and $H$ is a Sylow $2$-subgroup of $G$;
      \item $q=2^a$ for some odd integer $a$ and $|H|=3$.
\end{enumerate}
\end{thm2}

Since $G=\PSL_2(q)$ has a single conjugacy class of involutions, Theorem~\ref{t: se_strong} implies that we need only deal with the case where $H$ has odd order. We must prove that, under the supposition that the action of $G$ on the cosets of $H$ is binary, $H$ is trivial, or else we are in case (3) of the theorem.

A quick look at the subgroup structure of $G$ tells us that if $H$ has odd order, then there are two possibilities:
\begin{enumerate}
 \item $H$ is cyclic of order dividing $q\pm 1$;
 \item $p$ is odd and $H$ is a subgroup of a Borel subgroup of $G$.
\end{enumerate}

We will make use of a result that follows from results in \cite{garion}.

\begin{prop}\label{p: psl2 conj}
 Let $\CC$ be a conjugacy class in $\PSL_2(q)$ with $q\geq 3$. Then $\CC^2$ contains $\CC$.
\end{prop}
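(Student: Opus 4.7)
The plan is to invoke the detailed study of products of conjugacy classes of $\PSL_2(q)$ carried out by Garion in the cited paper, which rests on the classical character table of $\PSL_2(q)$ due to Schur. For each pair $(\CC_1, \CC_2)$ of conjugacy classes, Garion determines which classes appear in the product set $\CC_1 \cdot \CC_2$. Specialising to $\CC_1 = \CC_2 = \CC$ and reading off that $\CC$ itself appears in $\CC \cdot \CC$ is exactly the statement we want.

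To make this concrete, recall that for any $g \in \CC$, the number of pairs $(g_1, g_2) \in \CC \times \CC$ with $g_1 g_2 = g$ equals the class-algebra structure constant
\[
N(\CC, \CC; g) \;=\; \frac{|\CC|^2}{|G|} \sum_{\chi \in \operatorname{Irr}(G)} \frac{\chi(g)^2 \, \overline{\chi(g)}}{\chi(1)}.
\]
One must show $N(\CC, \CC; g) > 0$ for every non-trivial class $\CC$; the trivial case $\CC = \{1\}$ is immediate since $\{1\}^2 = \{1\}$. My plan would be to run through the standard families of conjugacy classes of $\PSL_2(q)$ — the unipotent class(es), the split and non-split semisimple classes, and, when $q$ is odd, the involution class — and in each case either exhibit the required factorisation by a direct matrix computation in $\SL_2(q)$, or verify positivity via the character sum above. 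This is precisely the enumeration performed by Garion.

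The main obstacle, if one wanted a self-contained argument, would lie in the small-$q$ cases and in the involution class when $q$ is odd, where the character-theoretic bounds are tightest and the character values on the two tori of order $q \pm 1$ must be manipulated via sums of roots of unity (one would typically need $q \ne 5$-style exceptions not to appear, hence the mild hypothesis $q \geq 3$). Since Garion's case-by-case analysis already disposes of every such situation for $q \geq 3$, the plan is simply to cite the appropriate statement from \cite{garion} rather than repeat those computations.
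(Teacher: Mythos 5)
Your method is exactly the paper's: the paper offers no argument for this proposition at all, it simply declares that the statement ``follows from results in \cite{garion}'', and your proposal is the same citation, with the structure-constant formula spelled out as the tool underlying Garion's case-by-case analysis. As a match of approach, there is nothing to fault.

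There is, however, a concrete gap in the verifiable claim you make (and it is inherited from the paper's own statement): it is \emph{not} true that the structure constant is positive for every non-trivial class when $q\geq 3$, and Garion's results, which concern the simple groups $\PSL_2(q)$ with $q\geq 4$, cannot be cited to cover $q=3$. Indeed, take $G=\PSL_2(3)\cong A_4$ and let $\CC$ be one of the two unipotent classes, i.e.\ one of the two $A_4$-classes of $3$-cycles, of size $4$. The three linear characters of $A_4$ take the values $1,\omega,\omega^{2}$ on $\CC$ (where $\omega$ is a primitive cube root of unity) and the degree-$3$ character vanishes on $\CC$, so for $g\in\CC$,
\[
\frac{|\CC|^2}{|G|}\sum_{\chi\in\mathrm{Irr}(G)}\frac{\chi(g)^{2}\,\overline{\chi(g)}}{\chi(1)}
=\frac{16}{12}\bigl(1+\omega+\omega^{2}+0\bigr)=0 .
\]
Equivalently, a direct check shows that all $16$ products of two $3$-cycles lying in the same $A_4$-class land in the inverse class, so $\CC^{2}\cap\CC=\emptyset$. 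Thus the hypothesis $q\geq 3$ only rules out $q=2$ (where the class of transpositions in $\PSL_2(2)\cong S_3$ fails); it does not save $q=3$, and your parenthetical suggestion that $q\geq3$ suffices to avoid all exceptional behaviour is wrong. The correct statement requires $q\geq 4$, or else must exclude the unipotent classes of $\PSL_2(3)$. This defect is harmless for the paper itself: the proposition is only invoked in Lemma~\ref{l: psl cyclic sub}, for the class of a generator of a cyclic subgroup of order dividing $q\pm1$, and for $q=3$ the only such class arising there is the involution class of $A_4$, for which the conclusion does hold.
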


We start with the case where $H$ is cyclic of order dividing $q\pm 1$. We make use of the fact that in this case $H$ is a TI-subgroup of $\PSL_2(q)$.

\begin{lem}\label{l: psl cyclic sub}
Let $H$ be a non-trivial cyclic subgroup of $G=\PSL_2(q)$ with $q>2$ and $|H|$ dividing $q\pm 1$. Then there always exist $H_1, H_2, H_3$, distinct conjugates of $H$ such that $H_3 \subset H_1.H_2$, unless $q=2^a$ with $a$ odd and $|H|=3$ (in which case the $H_i$'s cannot be found). 

In particular the action of $G$ on the set of cosets of $H$ is binary if and only if $q=2^a$ with $a$ odd and $|H|=3$.
\end{lem}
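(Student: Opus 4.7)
Plan: Because $|H|$ divides $q\pm 1$, $H$ is contained in the split or non-split torus of $G=\PSL_2(q)$; any two distinct tori of $G$ intersect trivially, so $H$ is a TI-subgroup. By Lemma~\ref{lem-RC-height-2}, the action of $G$ on the cosets of $H$ fails to be binary exactly when one can find three distinct conjugates $H_1,H_2,H_3$ of $H$ with $H_1\cap H_2\cdot H_3\neq\{1\}$, and the inclusion $H_3\subseteq H_1\cdot H_2$ claimed in the statement is one way to satisfy this condition. I therefore plan the proof in two halves: produce such a triple in every non-exceptional case, and rule out any triple in the exception $q=2^a$ with $a$ odd and $|H|=3$.

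For the exceptional case, I plan a Fricke-trace argument. Assume for contradiction that $h_1=h_2 h_3$ for nontrivial $h_i\in H_i$ coming from distinct conjugates. Each $h_i$ has order $3$ and lifts to $\SL_2(\f_q)$ with trace $\zeta+\zeta^{-1}=1$ in characteristic $2$, where $\zeta\in\f_4$ is a primitive cube root of unity. The Fricke identity $\operatorname{tr}(AB)+\operatorname{tr}(AB^{-1})=\operatorname{tr}(A)\operatorname{tr}(B)$ then gives $\operatorname{tr}(h_2 h_3^{-1})=0$, and because trace-zero elements of $\SL_2$ in characteristic $2$ square to the identity, $\tau:=h_2 h_3^{-1}$ is either $1$ (forcing $H_2=H_3$, a contradiction) or an involution. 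In the involution case $\tau h_2=h_3$, and the relations $\tau^2=h_2^3=(\tau h_2)^3=1$ exhibit $\langle\tau,h_2\rangle$ as a nontrivial quotient of the triangle group $\Delta(2,3,3)\cong A_4$, hence equal to $A_4$. But $A_4\not\leq\PSL_2(2^a)$ when $a$ is odd: the involutions of a Klein four-subgroup are unipotent and share a common fixed point on $\mathbb{P}^1(\f_q)$, whereas an element of order $3$ in $\PSL_2(2^a)$ with $3\mid q+1$ is elliptic and has no fixed point on $\mathbb{P}^1(\f_q)$, so no $C_3$ can normalize a Klein four-subgroup. This contradiction settles the exceptional case.

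For the non-exceptional case I split on $|H|$. When $|H|=2$ (so $q$ is odd and $q>2$), a Klein four-subgroup of $G$ furnishes three involutions $t_1,t_2,t_3$ with $t_3=t_1 t_2$, and the three subgroups $\langle t_i\rangle$ give the strong inclusion $H_3\subseteq H_1\cdot H_2$ directly. When $|H|=3$ and we are outside the exception, $\PSL_2(q)$ contains $A_4$ (by Dickson's classification, given the subfield restrictions), and three distinct order-$3$ subgroups of $A_4$ provide a relation $h_2 h_3=h_1\neq 1$ (for instance $(1,2,3)(1,3,4)=(2,3,4)$). When $|H|\geq 4$, let $\CC$ be the $G$-conjugacy class of a generator $h$ of $H$; by Proposition~\ref{p: psl2 conj}, $h=xy$ with $x,y\in\CC$. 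Since $N_G(H)/C_G(H)$ acts on $H$ only by inversion, $H\cap\CC=\{h,h^{-1}\}$, and because $h^2\in\CC$ requires $h^3=1$, the hypothesis $|H|\geq 4$ forces both $x\notin H$ and $y\notin H$. Setting $H_1=\langle x\rangle$, $H_2=H$, $H_3=\langle y\rangle$, the TI-property of $H$ ensures the three conjugates are distinct, and $x\in H_1\cap H_2\cdot H_3$ is nontrivial.

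The delicate step is the exceptional case, where the Fricke-identity computation must be combined with the Dickson-type fact $A_4\not\leq\PSL_2(2^a)$ for $a$ odd. The non-exceptional subcases all reduce transparently to a structural feature: the presence of a $V_4$ or $A_4$ subgroup inside $G$, or Garion's class-square identity $\CC\subseteq\CC^2$.
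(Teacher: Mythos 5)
Your proof is correct, but it reaches the goal by a genuinely different route from the paper's. The paper also combines Proposition~\ref{p: psl2 conj}, the TI property and Lemma~\ref{lem-RC-height-2}, but it applies Garion's result uniformly to every $|H|$: writing $h=h_1h_2$ with $h_1,h_2\in\CC$, either $\langle h_1\rangle\neq\langle h_2\rangle$ and the triple is found, or the degeneration forces $|H|=3$; the entire order-3 analysis (existence of a triple for $q$ odd and for $q=2^a$ with $a$ even, impossibility for $a$ odd) is then settled by one explicit computation, in which the requirement that a product of two order-3 elements again have order 3 becomes the quadratic $y^2+(x\pm 1)y+(x^2\pm x+1)=0$; over $\f_{2^a}$ this reduces to $Y^2+XY+X^2=0$, nontrivially solvable if and only if $\lambda^2+\lambda+1$ has a root, i.e.\ if and only if $a$ is even, while over odd $\f_q$ a solution always exists (directly when $-3$ is a square, by Chevalley--Warning on a ternary quadratic form otherwise). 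You instead split off $|H|=2$ (a Klein four-group, incidentally the one case where the literal inclusion $H_3\subset H_1.H_2$ really holds) and $|H|=3$ (an $A_4$ subgroup, whose existence you import from Dickson's theorem), reserving Garion's result for $|H|\geq 4$, and you treat the exceptional case structurally: Fricke's identity converts a hypothetical relation $h_1=h_2h_3$ among order-3 elements in characteristic 2 into an involution $\tau=h_2h_3^{-1}$, so that $\langle\tau,h_3\rangle$ is a quotient of $\Delta(2,3,3)\cong A_4$ containing elements of orders 2 and 3, hence equals $A_4$, contradicting $A_4\not\leq\PSL_2(2^a)$ for $a$ odd (your fixed-point argument for that fact is sound). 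Your route explains the exception conceptually---it is precisely the absence of $A_4$---at the cost of a three-way case split and an appeal to the subgroup classification, where the paper's single computation is elementary and self-contained. Two small points to tidy: with $\tau=h_2h_3^{-1}$ the relation is $\tau h_3=h_2$, not $\tau h_2=h_3$; and, exactly like the paper's own argument, your constructions for $|H|=3$ and $|H|\geq 4$ actually produce the condition $H_1\cap H_2\cdot H_3\neq\{1\}$ of Lemma~\ref{lem-RC-height-2}(5) rather than the literal inclusion in the statement (in your $A_4$ example $h_1^{-1}\notin H_2\cdot H_3$), but that condition is all the ``in particular'' requires, so this is a wrinkle of the statement shared by both proofs rather than a gap in yours.
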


\begin{proof}
Write $h$ for a generator of $H$. We use the fact that the only conjugates of $h$ that lie in $H$ are $h$ and $h^{-1}$. Now, by Proposition~\ref{p: psl2 conj}, we know that there exist $h_1, h_2$ conjugate to $h$ such that $h_1h_2 = h$. If $\langle h_1\rangle\neq \langle h_2\rangle$, then we can take $H_1=\langle h_1\rangle$, $H_2=\langle h_2\rangle$ and $H_3=\langle h\rangle$ and we have $H_3\subset H_1.H_2$ as required. 

Consider the case where $\langle h_1\rangle=\langle h_2\rangle$. In this case either $h_2=h_1$ or $h_1=h_1^{-1}$; the latter case is impossible because then $h_1h_2=1=h$ . The former case has $h_1^2$ a conjugate of $h_1$ and so $h_1^2=h_1^{-1}$ and we conclude that $|H|=3$. The rest of the proof thus assumes that $s=3$, and that $q$ is not a power of $3$.

Here are a few easy facts. An element of $g \in \SL_2(q)$ has order $3$ if and only if its trace is $-1$, if and only if its characteristic polynomial is $X^2+X+1$, if and only if it is conjugate to 
\[ g_0 = \left(\begin{array}{rr}
 -1 & 1 \\ 
 -1 & 0 
\end{array}\right) \, . \]
It follows that an element $h \in \PSL_2(q)$ has order $3$ if and only if $h= gZ$ for $g$ of trace $\pm 1$, if and only if $h$ is conjugate to $g_0 Z$. In particular, the elements of order $3$ form just one conjugacy class in $G$. % As a result, we are exactly after two elements $h_1, h_2 \in G$ of order $3$ with $h_1 \ne h_2$ and such that $h_3 = h_1 h_2$ also has order $3$ (putting $H_i = \langle h_i \rangle$ then produces the desired subgroups, and this is the only way to obtain them).

We may as well choose $h_1 = g_1 Z$ where 
\[
 g_1 = \begin{pmatrix}
  1 & -1 \\ 1 & 0
 \end{pmatrix} \, .
\]
Then put $h_2 = g_2 Z$ where 
\[
 g_2=\begin{pmatrix}
      x & y \\ z & t
     \end{pmatrix}
\]
and we assume that $x+t=\pm1$. Then observe that
\[
 g_1g_2=\begin{pmatrix}
       x-z & y-t \\ x & y
      \end{pmatrix}.
\]  
We want to know under what conditions we can choose $x,y,z,t\in\Fq$ such that
\[
 x+t=\pm1 \textrm{ and } xt-yz=1 \textrm{ and } x+y-z=\pm1.
\]
With some rearranging we obtain
\[
 y^2+(x\pm 1)y + (x^2\pm x+1)=0. \tag{$\ast$}
\]
Note that the two instances of ``$\pm$'' can be taken independently so, for any value of $x$ with $q$ odd we obtain up to 4 distinct quadratic equations in $y$.

If $q$ is even, then we need to solve
\[
 y^2+(x+1)y+(x^2+x+1)=0.
\]
We set $Y=y+1$ and $X=x+1$ and we obtain
\[
 Y^2+YX+X^2=0 \, ,
\]
which is homogeneous. Hence, either $X=0$ or else $\lambda^2+\lambda+1=0$ where $\lambda=Y/X$. The former solution corresponds to $x=1$ and we obtain $g_1=g_2$ and so $h_1=h_2$ and thus $H_1$ and $H_2$ are not distinct. The latter equation, $\lambda^2+\lambda+1=0,$ has a solution if and only if $a$ is even, exactly as announced in the statement of the lemma.

It remains to see that when $q$ is odd and not a power of $3$, we can find a solution to  $(\ast)$ for some choice of signs, which is different from the solution $x=1, y=-1$ (so $z=1, t=0$) which gives $g_1 = g_2$, and also different from the ``opposite'' solution which gives $g_1 = -g_2$ (in either case we would have $h_1 = h_2$). We choose to show that there is always a solution to $(\ast)$  with two ``plus'' signs, which of course  implies that $(x, y) \ne (1, -1)$ and $(x,y) \ne (-1,1)$.

First we try to find a solution with $x=0$, which requires us to pick $y$ with $y^2 +y + 1 = 0$. The discriminant of this equation is $-3$, and so $y$ can be found when $-3$ is a square in $\f_q$. To finish the proof, we now suppose that $-3$ is not a square in $\f_q$, and we introduce the quadratic form 
\[ Q(X, Y, Z) = Y^2 + (X+Z)Y + X^2 + XZ + Z^2 \, . \]
Any quadratic form in at least 3 variables over a finite field has a non-zero solution, as is classical (this is a consequence of the Chevalley-Warning Theorem). Let $(x, y, z) \ne (0,0,0)$ be such that $Q(x, y, z) = 0$. We claim that $z \ne 0$. Indeed, we have 
\[ Q(x, y, 0) = y^2 + xy + x^2 \, , \]
so that $Q(x, y, 0) = 0$ implies 
\[ \left(x+ \frac{1}{2}y \right)^2 + \frac{3}{4} y^2 = 0 \, , \]
so that 
\[ -3 = \frac{2^2 (x + \frac{1}{2}y)^2}{y^2} \in \f_q^{*2} \, . \]
This is at odds with our current assumption on the field $\f_q$, so $z \ne 0$. Now write 
\[ 0 = \frac{1}{z^2} Q(x, y, z) = y_1^2 + (x_1+1)y_1 + x_1^2 + x_1 + 1 \]
with $x_1= \frac{x}{z}$ and $y_1= \frac{y}{z}$. We are done.

The ``in particular'' now follows from Lemma~\ref{lem-RC-height-2}.
\end{proof}

We are left with the case where $p$ is odd and $H$ is a non-trivial odd-order subgroup of a Borel subgroup of $G$. We may assume that $H$ is not cyclic of order dividing $q\pm 1$, hence we assume that $U_0=O_p(H)$ is non-trivial. Then, by the structure of Borel subgroups of $G$, either $H=U_0$ (i.e. $H$ is a $p$-group) or $H=U_0\rtimes T_0$ where $T_0$ is cyclic of odd order dividing $q-1$.

This case is dealt with in four lemmas. We emphasise that we assume that $p$ is odd for the rest of this section. Since we are assuming that $H$ is non-trivial, we must show that the action of $G$ is not binary.

\begin{lem}\label{l: whatevs}
If $H\neq O_p(H)$, then the action of $G$ on the set of cosets of $H$ is not binary.
\end{lem}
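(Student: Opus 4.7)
The plan is to exhibit an $H$-orbit on $\Omega = G/H$ on which $H$ acts as a Frobenius group with complement $T_0$, and then combine Lemma~\ref{l: point stabilizer} with Lemma~\ref{l: frobenius} to get a contradiction.

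First, I would check that $H = U_0 \rtimes T_0$ is itself a Frobenius group. The subgroup $T_0$ sits inside the split torus $T$ of $G$, and the conjugation action of $T$ on $U$ can be identified, after writing $U \cong \f_q^+$, with multiplication by squares in $\f_q^\times$; since $p$ is odd, the kernel of this action inside $\PSL_2(q)$ is trivial, so $T$ acts fixed-point-freely on $U \smallsetminus \{1\}$. The non-trivial subgroup $T_0$ therefore acts fixed-point-freely on $U_0 \smallsetminus \{1\}$, which makes $H$ a Frobenius group with kernel $U_0$ and complement $T_0$.

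Next I would produce a suitable conjugate of $H$. Let $w \in G$ be the image of $\bigl(\begin{smallmatrix} 0 & 1 \\ -1 & 0 \end{smallmatrix}\bigr)$, which inverts $T$ and so in particular normalizes $T_0$, while conjugating $U$ to the opposite unipotent $U^-$. Then $T_0 \subset H \cap H^w$, and on the other hand $H^w \subset B^w = U^- T$, where $B = UT$ is the standard Borel; since $B \cap B^w = T$ by the Bruhat decomposition, we get $H \cap H^w \subset H \cap T = T_0$. Hence $H \cap H^w = T_0$.

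It follows that the $H$-orbit of the coset $Hw$ on $\Omega$ has size $|H|/|T_0| = |U_0| > 1$, and that the action of $H$ on this orbit is equivalent to the action of $H$ on $H/T_0$. By the first paragraph, this is a Frobenius action with complement $T_0$. Since $|T_0|$ is odd and $T_0 \neq \{1\}$, we have $|T_0| \ge 3$, so Lemma~\ref{l: frobenius} rules out this action being binary. Lemma~\ref{l: point stabilizer} then prevents the action of $G$ on $\Omega$ from being binary, as required. I don't foresee any genuine obstacle in this plan: the only computation is the identification $H \cap H^w = T_0$, which is immediate from Bruhat, and the fixed-point-freeness of $T_0$ on $U_0$ is built into the structure of the Borel in odd characteristic.
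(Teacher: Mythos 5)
Your proof is correct and follows essentially the same route as the paper's: both exhibit a conjugate of $H$ whose intersection with $H$ is exactly $T_0$, so that $H$ acts on the corresponding suborbit as a Frobenius group with complement $T_0$ of order at least $3$, and then conclude via Lemma~\ref{l: frobenius} and Lemma~\ref{l: point stabilizer}. The only cosmetic difference is that you obtain this conjugate using the Weyl element and the Bruhat decomposition ($B \cap B^w = T$), whereas the paper conjugates by an element of $N_G(T_0)\setminus T$ and invokes the TI-property of Sylow $p$-subgroups.
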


\begin{proof}
In this case $H=U_0\rtimes T_0$ where $U_0=O_p(H)$ and $T_0$ is a non-trivial cyclic group of order at least $3$ and dividing $q-1$. Note that $T_0$ acts fixed-point-freely on $U_0$. 

Let $U$ be the unique Sylow $p$-subgroup containing $U_0$. Note that $N_G(T_0)$ is dihedral of order $q-1$; what is more $N_G(T_0)$ has an index $2$ subgroup $T$ that is cyclic of order $\frac{q-1}{2}$, that contains $T_0$ and that satisfies $N_G(U)=U\rtimes T$.

Now let $h\in N_G(T_0)\setminus T$ and observe that $H\cap H^h$ contains $T_0$. On the other hand, since Sylow $p$-subgroups of $G$ are TI-subgroups, $U\cap U^h=\{1\}$ and so $U_0\cap U_0^h=\{1\}$. We conclude that $H\cap H^g=T_0$.

This implies, in particular, that $H$ acts as a Frobenius group on the suborbit corresponding to $H^g$ and $T_0$ is the corresponding Frobenius complement. Thus Lemma~\ref{l: frobenius} implies that, since $|T_0|>2$, this action of $H$ is not binary and now Lemma~\ref{l: point stabilizer} yields the result we seek.
\end{proof}

\begin{lem}\label{l: consecutive squares}
 Let $q$ be an odd prime power.  Define
 \[
  C:=\{x \in \Fq \mid x \textrm{ and } x+1 \textrm{ are nonzero squares}\}.
 \]
Then
\[
 C=\begin{cases}
    \frac{q-5}{4}, \textrm{ if } q\equiv 1\pmod 4; \\
    \frac{q-3}{4}, \textrm{ if } q\equiv 3\pmod 4.
   \end{cases}
\]
\end{lem}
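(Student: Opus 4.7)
The plan is to count $|C|$ by a standard quadratic character sum argument. Let $\chi : \f_q \to \{-1,0,1\}$ be the quadratic character, extended by $\chi(0)=0$. For $x \in \f_q^\times$, the quantity $\frac{1}{2}(1+\chi(x))$ equals $1$ if $x$ is a nonzero square and $0$ otherwise. Since $x \in C$ requires $x \ne 0$ and $x+1 \ne 0$, we have
\[ |C| = \frac{1}{4} \sum_{\substack{x \in \f_q \\ x \ne 0, -1}} (1 + \chi(x))(1 + \chi(x+1)) \, . \]

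Expanding the product, I would split this into four pieces: the constant term contributes $q-2$, the term $\sum_{x \ne 0, -1} \chi(x) = -\chi(-1)$ since $\sum_{x \ne 0} \chi(x) = 0$, and similarly $\sum_{x \ne 0, -1} \chi(x+1) = -1$ after reindexing. The main term is the Jacobsthal-style sum $\sum_{x \ne 0, -1} \chi(x)\chi(x+1) = \sum_{x \in \f_q} \chi(x^2 + x)$, where we have used that $\chi$ is multiplicative and that the excluded values $x=0, -1$ contribute nothing (since $\chi(0)=0$).

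For this last sum I would invoke the standard evaluation $\sum_{x \in \f_q} \chi(ax^2 + bx + c) = -\chi(a)$ whenever $a \ne 0$ and the discriminant $b^2 - 4ac$ is nonzero; applied to $x^2 + x$ this yields the value $-1$. Putting everything together gives
\[ |C| = \frac{1}{4} \left( q - 2 - \chi(-1) - 1 - 1 \right) = \frac{q - 4 - \chi(-1)}{4} \, . \]
One then splits on the residue of $q$ modulo $4$, using $\chi(-1) = 1$ when $q \equiv 1 \pmod 4$ and $\chi(-1) = -1$ when $q \equiv 3 \pmod 4$, to recover the two stated values.

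There is no real obstacle here; the only point requiring a touch of care is correctly accounting for the two excluded values $x=0$ and $x=-1$ in each of the four partial sums, and recalling the closed-form evaluation of the character sum of a nondegenerate quadratic polynomial (which can be proved in one line by completing the square and reducing to the Gauss sum $\sum_y \chi(y^2 - d) = -\chi(1)$).
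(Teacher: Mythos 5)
Your argument is correct, and it is genuinely different from the one in the paper. You count $|C|$ via the quadratic character: writing the indicator of ``nonzero square'' as $\tfrac12(1+\chi(\cdot))$, expanding, and evaluating the four resulting sums, the only nontrivial one being $\sum_{x}\chi(x^2+x)=-1$, a complete character sum of a nondegenerate quadratic; the dichotomy in the answer then comes from $\chi(-1)=\pm 1$. All four partial sums are handled correctly (in particular the excluded points $x=0,-1$ contribute nothing to the Jacobsthal-type sum since $\chi(0)=0$), and the final arithmetic $\frac{q-4-\chi(-1)}{4}$ matches both cases of the statement. The paper instead argues without any character theory: it exhibits an explicit map $f(x)=\left(\frac{x-x^{-1}}{2}\right)^2$, defined on $\Fq$ with a few points removed, which lands in $C$ because of the identity $f(x)+1=\left(\frac{x+x^{-1}}{2}\right)^2$, shows $f$ is onto $C$ (from $b^2+1=a^2$ one takes $x=a+b$, so $x^{-1}=a-b$), and shows every fiber has exactly $4$ elements; the case distinction on $q \bmod 4$ arises because the points $\pm i$ with $i^2=-1$ must additionally be excluded from the domain when they exist. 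Your approach is the more mechanical and more general one --- the same computation handles any pattern of consecutive squares/nonsquares and is standard in the study of Paley graphs --- at the cost of invoking the closed-form evaluation of $\sum_x\chi(ax^2+bx+c)$; the paper's approach is longer but entirely self-contained, using only explicit algebra in $\Fq$.
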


\begin{proof}
Suppose first that $-1$ is not a square in $\f_q$, that is, suppose $q \equiv 3$ mod 4. The map 
\[ f \colon \f_q \smallsetminus \{ 0, 1, -1 \} \longrightarrow C  \]
defined by $f(x) = \left( \frac{x - x^{-1}}{2}  \right)^2$ is well-defined, as follows from the observation that 
\[ \left( \frac{x - x^{-1}}{2}  \right)^2 +1 = \left( \frac{x + x^{-1}}{2}  \right)^2 \, , \]
and our assumption on $q$ guarantees that $x + x^{-1} \ne 0$.
However, $f$ is also onto: if $b^2 \in C$, we have $b^2 + 1 = a^2$ for some $a$, so that putting $x= a+b$ yields $x^{-1} = a - b$, and thus $b^2  = f(x)$.

Fix $y$ such that $y - y^{-1} \ne 0$, and let us count the number of $x$ such that $x - x^{-1} = y - y^{-1}$. Considering the discriminant, we conclude that there are two possible values of $x$ when $y^2 \ne -1$, and of course this is always the case given our current assumptions.  As a result, the equation $f(x)=f(y)$ has always four solutions, as it amounts to $x - x^{-1} = \pm (y - y^{-1})$, and the size of $C$ is $\frac{q-3}{4}$. 

When $q \equiv 1$ mod 4 on the other hand, writing $i$ for an element of $\f_q$ with $i^2 = -1$, we use the map $f$ defined on $\f_q \smallsetminus \{ 0,\pm 1, \pm i \}$ by the same formula as above, and argue similarly.
\end{proof}

\begin{lem}\label{l: p elements}
Let $U$ be a Sylow $p$-subgroup of $G$, let $g$ be a non-trivial element of $U$ and let $\CC$ be the conjugacy class of $g$ in $G$. If $q\neq 9$, then the component group of $g$ in $\Gamma(\CC)$ is equal to $U$.
\end{lem}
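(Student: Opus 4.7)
The plan is to identify the Sylow $p$-subgroup $U$ of $G = \PSL_2(q)$ with $(\f_q, +)$, under which the two $G$-conjugacy classes of non-trivial $p$-elements correspond to the squares $S$ and non-squares $N$ of $\f_q^\times$. The argument is symmetric in the two classes (via an outer automorphism coming from $\PGL_2(q)$), so I may assume $\CC \cap U = S$, and then conjugate within $G$ (using that $N_G(U)/U$ acts on $U$ via multiplication by squares) so that $g$ corresponds to $1 \in S$. A key preliminary observation is that $U$ is self-centralising in $G$: any element of $\CC$ which commutes with a non-trivial element of $U$ must itself lie in $U$. Hence the connected component of $g$ in $\Gamma(\CC)$ lies entirely inside $U$, and it suffices to analyse the induced subgraph on $S$, whose edges are the pairs $\{x, y\}$ with $\pm(x-y) \in S$.

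I then split into cases according to $q \bmod 4$. If $q \equiv 3 \pmod 4$, then $-1 \notin S$, so for any distinct $x, y \in S$ exactly one of $\pm(x-y)$ is a square, producing an edge: the induced subgraph on $S$ is complete, and the component of $g$ is all of $S$. Since a proper $\f_p$-subspace of $\f_q$ has size at most $q/p$, while $|S| = (q-1)/2 > q/p$ for every odd prime $p$ as soon as $q > 3$, the additive span of $S$ is all of $U$; the residual case $q = 3$ is trivial because $|U|$ is prime.

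If instead $q \equiv 1 \pmod 4$, then $-1 \in S$ and the edge condition reduces to $x - y \in S$. For any $c \in S$, multiplication $\varphi_c$ by $c$ is an automorphism of the induced subgraph; if $c$ lies in the component $\Delta$ of $g = 1$, then $\varphi_c$ maps $\Delta$ to the component of $c$, which is $\Delta$ itself, so $c^{-1}\Delta = \Delta$, and applying this for $c, d \in \Delta$ gives $c^{-1} d \in \Delta$. Hence $\Delta$ is a multiplicative subgroup of $\f_q^\times$ contained in the cyclic group $S$ of order $(q-1)/2$. By Lemma \ref{l: consecutive squares}, $\Delta$ contains $1$ together with its $(q-5)/4$ neighbours $N_0 = \{y \in S : y-1 \in S\}$, so $|\Delta| \ge (q-1)/4$; since every subgroup of $S$ of size strictly greater than $(q-1)/4$ must equal $S$, the crux is to upgrade this weak inequality to a strict one whenever $q \ne 9$.

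The main obstacle is precisely the borderline case $\Delta = N_0 \cup \{1\}$ of size exactly $(q-1)/4$. If this held, $\Delta$ would be the unique subgroup of that order, namely the group $F$ of fourth powers; writing $y = w^2$ and using that $y \in F$ iff $w \in S$, the identity $N_0 \cup \{1\} = F$ translates to the character identity $\chi((w-1)(w+1)) = \chi(w)$ for every $w \in \f_q$ with $w^2 \ne 1$, where $\chi$ denotes the quadratic character. Multiplying through by $\chi(w)$ and summing gives $\sum_{w \in \f_q} \chi(w^3 - w) = q - 3$, while Hasse's bound (equivalently, the Weil bound for the elliptic curve $y^2 = w^3 - w$) gives $|\sum_{w} \chi(w^3 - w)| \le 2 \sqrt q$, forcing $q \le 9$. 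The exclusion $q \ne 9$ is therefore sharp: $q = 9$ is the unique value where equality can hold, and the borderline $\Delta = F$ genuinely does occur there. The remaining small case $q = 5$ is trivial since $|U|$ is prime, and for all other values we conclude $\Delta = S$ and finish as in the first case.
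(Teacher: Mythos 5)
Your proof is correct, and while it shares the paper's overall frame — identifying $U$ with $(\f_q,+)$, noting the component of $g$ stays inside $U$ (the paper leaves the self-centralising point implicit), disposing of $q\equiv 3 \pmod 4$ via the complete graph, and invoking Lemma~\ref{l: consecutive squares} — your treatment of the crux case $q\equiv 1\pmod 4$ is genuinely different. The paper argues combinatorially: all components have equal size at least $\frac{q-1}{4}$, so there are at most two; if there are two, each is a complete graph on $\frac{q-1}{4}$ vertices, and an elementary additive counting argument (the subgroups $\langle X_1\rangle, \langle X_2\rangle$ have index $3$ and their union sits inside the set of squares) forces $q\le 9$, with $q\le 15$ handled by a separate ad hoc step. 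You instead exploit multiplicative structure: the component $\Delta$ of $1$ is a subgroup of the cyclic group $S$ of squares — this is exactly the automorphism trick the paper itself uses to prove Lemma~\ref{lem-cubes} for cubes, so your argument unifies the two lemmas methodologically — whence the only possible proper outcome is $\Delta = F$, the fourth powers; you then convert $F = N_0\cup\{1\}$ into the identity $\chi(w^3-w)=1$ for $w\ne 0,\pm 1$ and rule it out for $q>9$ by the Weil/Hasse bound $\left|\sum_w \chi(w^3-w)\right|\le 2\sqrt{q}$. What each approach buys: the paper's proof is entirely elementary and self-contained, at the price of a slightly delicate case split; yours is structurally cleaner, explains \emph{why} $q=9$ is exceptional (equality in Hasse for the supersingular curve $y^2=x^3-x$ over $\f_9$, where $\Delta=F=\{\pm 1\}$ really does occur), but imports the Weil bound, a substantially deeper input than anything the paper uses.
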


Note that if $q=9$, the the component group of $g$ in $\Gamma(\CC)$ is equal to $\langle g\rangle$. 

\begin{proof}
Recall that there are two conjugacy classes of $p$-elements in $G$ and that $|\CC\cap U|=\frac{q-1}{2}$. Consider $\Gamma_{\CC,U}$, the subgraph induced on $U\cap \CC$ in $\Gamma(\CC)$.

It is easy to see that $\Gamma_{\CC,U}$ is isomorphic to the graph $\Delta$ whose vertices are nonzero squares in $\Fq$, with two vertices, $x$ and $y$, connected if and only if $x-y$ or $y-x$ is a square in $\Fq$.

If $q\equiv 3\pmod 4$, then $-1$ is not a square in $\Fq$ and, for all $x,y\in \Fq$, exactly one of $x-y$ and $y-x$ is a square in $\Fq$. We conclude that $\Gamma_{C,U}$ is connected and so the component group of $g$ contains $\frac{q-1}{2}$ elements. We conclude that the component group of $g$ is $U$.

If $q\equiv 1\pmod 4$, then we use Lemma~\ref{l: consecutive squares}. We claim that each vertex in $\Gamma_{\CC,U}$ has valency $v=\frac{q-5}{4}$. To see this, write $x_1,\dots, x_v$ for the set of elements $x$ in $\Fq$ with the property that $x$ and $x+1$ are squares. Now, let $y$ be a square in $\Fq$ and observe that $y$ is connected to $y+yx_1, \dots, y+yx_v$ in $\Gamma_{\CC,U}$. This implies that a connected component of $\Gamma_{\CC,U}$ contains at least $\frac{q-1}{4}$ elements. Since all connected components contain the same number of elements, we either have that $\Gamma_{\CC,U}$ is connected (and we are done, as per the previous paragraph) or else $\Gamma_{\CC,U}$ has two connected components. Assume the latter. Also, let $U'$ be the component group of $g$, and assume that $U' \ne U$; we must show that $q=9$.

Suppose first that $q \le 15$. Since $q$ is an odd prime power, which is not prime (as this would imply $U'=U$), we deduce that $q=9$. So we may as well suppose that $q>15$ and look for a contradiction. Under this assumption however, we have $\frac{q-1}{4}+1>\frac{q}{5}$, so we see that the index of $U'$ in $U$ is $3$, and that $q$ is a power of $3$.

The situation is as follows: $\Gamma_{\CC,U}$ has two connected components, each containing $\frac{q-1}{4}$ vertices and, since the valency of each vertex is $\frac{q-5}{4}$, each is isomorphic to a complete graph. Write $X_1$ and $X_2$ for each component and note that $\langle X_1\rangle$ and $\langle X_2\rangle $ are both index $3$ subgroups of $U$. We see also that $\langle X_1 \rangle$ and $\langle X_2 \rangle$ together comprise at least $\frac{q-1}{2}$ elements and, since this exceeds $\frac{q}{3}$, these two subgroups generate $U$. Finally note that, since the difference of any two vertices in $X_1$ (resp. $X_2$) is a square, $\langle X_1\rangle \cup \langle X_2\rangle \subseteq X_1\cup X_2\cup\{0\}$, the set of squares in $\Fq$. But now
\[
 \frac{q}{3}+ \frac{q}{3}-\frac{q}{9} = |\langle X_1\rangle \cup \langle X_2\rangle| \leq X_1\cup X_2\cup\{0\} = \frac{q+1}{2}
\]
and we obtain that $q\leq 9$, a contradiction which finishes the proof.
\end{proof}

\begin{lem}\label{l: psl2 sylow}
If $H=O_p(H)$, then the action of $G$ on the set of cosets of $H$ is not binary.
\end{lem}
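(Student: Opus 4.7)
Since $H = O_p(H)$ is a non-trivial $p$-subgroup contained in a Borel subgroup of $G = \PSL_2(q)$ (with $p$ odd), it lies in some Sylow $p$-subgroup $U$ of $G$, which is elementary abelian of order $q$. The plan is to first force $H = U$ via the component-group apparatus of Theorem~\ref{coro-connected-comp} together with Lemma~\ref{l: p elements}, then to verify the criterion of Lemma~\ref{lem-RC-height-2} directly. Note that Sylow $p$-subgroups of $\PSL_2(q)$ are TI, since $C_G(g) = U$ for every non-identity $g \in U$, so the hypothesis of Lemma~\ref{lem-RC-height-2} is available.

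\emph{Reduction to $H = U$.} Pick any non-identity $g \in H$ and let $\CC$ be its conjugacy class. Every $p$-class not meeting $H$ has zero $p$-fixity on $G/H$, whereas $\CC$ has positive $p$-fixity, so $\CC$ may be taken of maximal $p$-fixity. Assuming binarity, Theorem~\ref{coro-connected-comp} says the component group of $g$ in $\Gamma(\CC)$ is contained in $H$, and Lemma~\ref{l: p elements} identifies this component group with $U$ whenever $q \neq 9$. Hence $U \subseteq H$, forcing $H = U$.

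\emph{The case $H = U$.} Working in $\SL_2(q)$, take $U_2$ and $U_3$ to be the projective images of the upper- and lower-triangular unipotent subgroups, respectively. Since $p$ is odd, one can pick $a, b \in \f_q^\times$ with $ab = -4$, and form $v = \left(\begin{smallmatrix} 1 & a \\ 0 & 1 \end{smallmatrix}\right)$ and $w = \left(\begin{smallmatrix} 1 & 0 \\ b & 1 \end{smallmatrix}\right)$ so that
\[ vw = \begin{pmatrix} 1+ab & a \\ b & 1 \end{pmatrix} \]
has trace $2 + ab = -2$; then $-vw$ has trace $2$ and is a non-identity unipotent matrix of $\SL_2(q)$. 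Its image in $\PSL_2(q)$ is therefore a non-trivial $p$-element, contained in a unique Sylow $U_1$; since $-vw$ is neither upper- nor lower-triangular (both $a$ and $b$ being non-zero), $U_1$ is distinct from $U_2$ and from $U_3$. The image of $vw$ is thus a non-trivial element of $U_1 \cap U_2 \cdot U_3$, and Lemma~\ref{lem-RC-height-2} lets us conclude that the action is not binary.

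\emph{The exception $q = 9$.} For $q = 9$, Lemma~\ref{l: p elements} only yields $\langle g \rangle \leq H$, so one must also handle the possibility $|H| = 3$. Since $|H|$ is prime and $H$ lies inside the TI-subgroup $U$, the subgroup $H$ is itself TI, and Lemma~\ref{lem-RC-height-2} is still applicable; one may produce the three required distinct conjugates of $H$ in $G \cong A_6$ either by direct calculation (writing down three 3-cycles, or three double 3-cycles, whose product is the identity) or by appeal to the classification of binary actions of $A_n$ from \cite{npg_ppg}. This small special case is the main obstacle of the proof; for all other $q$ the argument reduces to the uniform trace identity $\operatorname{tr}(vw) = 2 + ab$.
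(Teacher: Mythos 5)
Your proof is correct and follows essentially the same route as the paper: reduce to $H$ being a Sylow $p$-subgroup via Theorem~\ref{coro-connected-comp} and Lemma~\ref{l: p elements}, handle $q=9$ separately through $G\cong A_6$, and then verify condition (5) of Lemma~\ref{lem-RC-height-2} with a product of an upper- and a lower-triangular unipotent matrix of trace $-2$. Indeed, the paper's explicit identity $\left(\begin{smallmatrix} -3 & 1 \\ -4 & 1\end{smallmatrix}\right)=\left(\begin{smallmatrix} 1 & 1 \\ 0 & 1\end{smallmatrix}\right)\left(\begin{smallmatrix} 1 & 0 \\ -4 & 1\end{smallmatrix}\right)$ is exactly your construction with $a=1$, $b=-4$.
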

\begin{proof}
In this case, $H$ is a non-trivial $p$-group. Let $g$ be a $p$-element of maximal fixity. Lemma~\ref{l: p elements} and Theorem \ref{coro-connected-comp} imply that either $q=9$ and $|H|=3$ or else $H$ is equal to a Sylow $p$-subgroup of $G$. If $q=9$, then $G=A_6$ and the lemma follows from \cite{npg_ppg} (or can be checked directly using GAP). Thus we assume that $H$ is a Sylow $p$-subgroup of $G$.

We observe that 
\begin{equation}\label{e: h2}
\begin{pmatrix} -3 & 1 \\ -4 & 1 \end{pmatrix} = \begin{pmatrix}1 & 1 \\ 0 & 1 \end{pmatrix}\begin{pmatrix}1 & 0 \\ -4 & 1 \end{pmatrix}.
\end{equation}
We let $h_1$ (resp $h_2$, $h_3$) be the projective image in $G$ of the first (resp. second, third) matrix of \eqref{e: h2}.

Note that all matrices in \eqref{e: h2} have trace equal to $\pm 2$, hence $h_1, h_2$ and $h_3$ are all $p$-elements. Note, too, that $h_1$ is the image of an upper-triangular matrix, $h_2$ is the image of a lower-triangular matrix and $h_3$ is the image of a matrix that is neither upper nor lower-triangular. Thus $h_1, h_2, h_3$ lie in distinct Sylow $p$-subgroups of $G$ which we denote $H_1, H_2$ and $H_3$, respectively. Now, since $h_1\in H_1\cap H_2\cdot H_3$, Lemma~\ref{lem-RC-height-2} gives the result.
\end{proof}

We remark that it is easy to use a similar strategy to classify the transitive binary actions of the Suzuki groups. 

\begin{thm}\label{t: suz binary transitive}
Let $G={^2\!B_2}(2^{2a+1})$, with $a\geq 1$, act on $\Omega$, the set of right cosets of a subgroup $H<G$. The action is binary if and only if one of the following occurs:
\begin{enumerate}
      \item $H=\{1\}$ and the action is regular;
      \item $H$ is the centre of a Sylow $2$-subgroup of $G$.
\end{enumerate}
\end{thm}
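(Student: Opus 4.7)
The proof mirrors that of Theorem~\ref{t: psl2 binary transitive}. Since $G = {}^2\!B_2(q)$ has a single class of involutions, Theorem~\ref{t: se_strong} immediately handles the case of even $|H|$ and supplies the second alternative in the statement. It thus remains to show that when $H$ is non-trivial of odd order, the action of $G$ on $\Omega = G/H$ is not binary.

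Writing $q = 2^{2a+1}$, we have $|G| = q^2(q-1)(q^2+1)$, and the factorisation $q^2 + 1 = (q + \sqrt{2q}+1)(q - \sqrt{2q}+1)$ exhibits three pairwise coprime odd divisors $q-1$, $q + \sqrt{2q}+1$ and $q - \sqrt{2q}+1$ of $|G|$. Crucially, $3 \nmid |G|$: since $q \equiv 2 \pmod 3$, one has $q - 1 \equiv 1$ and $q^2 + 1 \equiv 2 \pmod 3$. By the standard subgroup structure of Suzuki groups (see \cite[\S 13]{suzuki}), any non-trivial odd-order subgroup $H$ of $G$ is cyclic and lies in a unique maximal torus $T$, of one of the three orders above, and each such $T$ is a TI-subgroup of $G$. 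Since $H$ is the unique subgroup of $T$ of its order, $H$ is itself a TI-subgroup of $G$.

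With $H$ non-trivial and TI, Lemma~\ref{lem-RC-height-2} reduces the task to producing three distinct $G$-conjugates $H_1, H_2, H_3$ of $H$ with $H_1 \cap H_2\cdot H_3 \neq \{1\}$. Letting $\CC$ be the conjugacy class of a generator $h$ of $H$, this is equivalent to finding $h_1, h_2 \in \CC$ such that $h = h_1 h_2$ and $\langle h \rangle, \langle h_1 \rangle, \langle h_2 \rangle$ are pairwise distinct. The existence of some factorisation $h = h_1 h_2$ within $\CC$ is the Suzuki-group analog of Proposition~\ref{p: psl2 conj}, and it follows from a direct structure-constant computation using the well-known character tables of ${}^2\!B_2(q)$. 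Distinctness of the three cyclic subgroups fails only on ``bad'' factorisations: $h_1 = h_2$ (forcing $h_1^2 \in \CC$, i.e.\ $h_1$ and $h_1^2$ are $G$-conjugate), or $h_1 \in \langle h \rangle$ or $h_2 \in \langle h \rangle$. In each case the obstruction is governed by the action of $N_G(T)/T$ on $T$, which is by inversion (when $|T|=q-1$) or by a Frobenius-type automorphism of order $4$ (for the other two tori). Analysing the resulting numerical conditions on the order of the offending element in the style of the proof of Lemma~\ref{l: psl cyclic sub}, and using that $3 \nmid |G|$ to exclude the condition $h_1^3 = 1$, one shows that the bad factorisations never exhaust $\CC \cdot \CC$, and a valid triple $(h_1, h_2)$ can always be found.

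The main obstacle is this arithmetic verification across the three torus types—an analog of the explicit calculation $(\ast)$ in the proof of Lemma~\ref{l: psl cyclic sub}—which requires an explicit description of how $N_G(T)/T$ acts on each torus. Once this is in hand, no exception analogous to case~(3) of Theorem~\ref{t: psl2 binary transitive} appears, precisely because that exception was forced by elements of order $3$, which ${}^2\!B_2(q)$ does not possess; Lemma~\ref{lem-RC-height-2} then concludes.
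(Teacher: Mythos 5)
Your skeleton matches the paper's proof: Theorem~\ref{t: se_strong} disposes of even $|H|$; for odd $|H|$ you correctly note that $H$ is cyclic and a TI-subgroup, reduce via Lemma~\ref{lem-RC-height-2} to producing distinct conjugates $H_1,H_2,H_3$ with $H_1\cap H_2\cdot H_3\neq\{1\}$, and invoke a structure-constant computation from Suzuki's character table. But there is a genuine gap at exactly the point you yourself flag as ``the main obstacle'': you never prove that a \emph{good} factorisation $h=h_1h_2$ (one with $\langle h\rangle,\langle h_1\rangle,\langle h_2\rangle$ pairwise distinct) exists. You propose to get this by a per-torus arithmetic analysis in the style of the calculation $(\ast)$ in Lemma~\ref{l: psl cyclic sub}, using an explicit description of the action of $N_G(T)/T$ on each torus and the fact that $3\nmid|G|$; this analysis is not carried out, and the claim that the bad factorisations ``never exhaust'' the factorisations of $h$ is precisely the statement that needs proof. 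As written, the central step is asserted, not proved.

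The paper closes this gap with a counting comparison that makes the case analysis unnecessary. First, every bad pair has \emph{both} factors in $H$: if $\langle h_1\rangle=\langle h_2\rangle$, then $h=h_1h_2\in\langle h_1\rangle$ forces $\langle h_1\rangle=\langle h\rangle=H$ by equality of orders, so the bad pairs are exactly the pairs $(x,x^{-1}h)$ with $x\in\CC\cap H$. Since $H$ is TI, $\CC\cap H$ is a single orbit of $N_G(H)$ on $H\setminus\{1\}$, and these orbits have size at most $4$ --- all one needs of the normaliser is that $|N_G(T)/T|\le 4$ and that the abelian torus $T$ centralises $H$, with no explicit description of the action. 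So there are at most $4$ bad pairs. On the other side, the class-multiplication formula (\cite[p.~45]{isaacs}), evaluated with the character table in \cite[\S 17]{suzuki}, shows that for every $h$ of odd order the number of pairs $(x,y)\in\CC\times\CC$ with $xy=h$ is strictly greater than $4$. Hence a good pair exists and Lemma~\ref{lem-RC-height-2} concludes; no exception analogous to case (3) of Theorem~\ref{t: psl2 binary transitive} can arise, and the role you assign to $3\nmid|G|$ is subsumed by this count. To repair your write-up, replace the deferred arithmetic with this comparison (or genuinely execute that arithmetic, which is considerably more work).
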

\begin{proof}
The group $G$ has a unique class of involutions hence, when $|H|$ is even, the result follows from Theorem~\ref{t: se_strong}. Assume, then, that $|H|>1$ is odd. Consulting \cite{suzuki} we see that in this case $|H|$ is cyclic of order dividing $q-1$, $q-r+1$ or $q+r+1$ where $q=2^{2a+1}$ and $r=2^{a+1}$. What is more $H$ is a TI-subgroup.

Let $h$ be a generator of $H$ and let $N=N_G(H)$. Consider the action of $N$ on the set of non-trivial elements of $H$. Again referring to \cite{suzuki} it is easy to see that the orbits in this action are of size at most $4$, hence $h$ is conjugate in $G$ to at most 4 of its powers (including itself).

Let $\mathcal{C}$ be the conjugacy class of $G$ containing $h$  and recall the standard formula (see, for instance, \cite[p. 45]{isaacs}):
\[
 |\{(x,y)\in\mathcal{C} \mid xy=h\}| = \frac{|\mathcal{C}|^2}{|G|} \sum\limits_{\chi \in \mathrm{Irr}(G)} \frac{\chi(h)^2\overline{\chi(h)}}{\chi(1)}.
\]
Using the character table given in \cite[\S17]{suzuki}, together with this formula, it is easy to check that, for any fixed $H$ of odd order, the value of the expression on the right-hand side of the formula is strictly greater than $4$. Thus we can find $x, y\in \mathcal{C}$ with $x,y\not\in H$ such that $xy=H$. Now set $H_1=\langle h\rangle=H$, $H_2=\langle x\rangle$ and $H_3=\langle y\rangle$ and observe that these are distinct conjugates of $H$. By construction $h \in H_1\cap H_2\cdot H_3$   and so Lemma~\ref{lem-RC-height-2} implies the result.
\end{proof}

\bibliography{myrefs}
\bibliographystyle{amsalpha}

\end{document}